
%
\documentclass[letterpaper,10pt,reqno,final]{amsart}
%
%
\usepackage{amsmath}%
\usepackage{amsfonts}%
\usepackage{amssymb}%
\usepackage{graphicx}
\usepackage{functan}
\usepackage{mathrsfs}
\usepackage[all,cmtip]{xy}
\usepackage[obeyspaces]{url}
\usepackage{braket}
\newtheorem{theorem}{Theorem}

\newtheorem{corollary}{Corollary}

\newtheorem{definition}{Definition}
\newtheorem{example}{Example}

\newtheorem{lemma}{Lemma}

\newtheorem{remark}{Remark}

\numberwithin{equation}{section}
\numberwithin{theorem}{section}
\numberwithin{lemma}{section}
\numberwithin{corollary}{section}
\numberwithin{definition}{section}
\numberwithin{example}{section}
\numberwithin{remark}{section}
\begin{document}
\title[Approximation of Schr{\"o}dinger Unitary Groups]{Approximation of Schr{\"o}dinger Unitary Groups of Operators by Particular Projection Methods}
\author{Fredy Vides}
\address
{Escuela de Matem{\'a}tica y Ciencias de la Computaci{\'o}n \newline%
\indent Universidad Nacional Aut{\'o}noma de Honduras}%
\email{fvides@unah.edu.hn}
\urladdr{http://fredyvides.6te.net}
\keywords{Schr{\"o}dinger Semigroups, Discretizable Hilbert spaces, Particular Projection Methods, Particular Representation of Operators.}
\subjclass[2010]{Primary 47N40, 65J08; Secondary 47A58, 47L90}
\thanks{This research has been performed in part thanks to the financial support of the School of Mathematics and Computer Science of the National University of Honduras.}
\date{\today}

\begin{abstract}
In this paper we work with the approximation of unitary groups of operators of the form $e^{-itH}$ where $H\in\mathscr{L}(\mathcal{H})$ is the self-adjoint Hamiltonian of a given Hermitian quantum dynamical system modeled in the discretizable
Hilbert space $\mathcal{H}=\mathcal{H}(G)$, to perform such approximations we implement some techniques from operator theory that we name particular projection methods by compatibility  with quantum theory conventions. Once particular representations are defined we study the interelation between some of them properties with the original operators that they mimic. In the end some estimates for numerical implementation are presented to verify the theoretical discussion.
\end{abstract}
\maketitle

\section{Introduction} \label{intro}
In this work we will focus our attention in the approximation of Schr{\"o}dinger semigroups of operators that will in be described in general by the set $\{U_t:=e^{-itH}:t\in R\}$, whose elements clearly satisfy the semigroups conditions: (i) $U_0=\mathbf{1}$, (ii) $U_t\circ U_s(\cdot)=U_{t+s}(\cdot)$ and (iii) $\lim_{h\to0^+}U_hx=x, \forall x\in \mathcal{D}\subseteq \mathcal{H}(G)$, besides by theorem T.\ref{stone} the condtion (iv) $\norm{}{U_t u_0}=\norm{}{u_0}, t\in\mathbf{R}$, will be also satisfied when $H$ is self adjoint, wich means that $U_t$ is unitary for any $t\in\mathbf{R}$. In the expression presented above the operator $H\in\mathscr{L}(\mathcal{H})$ is the Hamiltonian of a given quantum dynamical system whose abstract evolution equation
is given by:
\begin{equation}
E \psi(t)=H\psi(t)
\label{abs_ev}
\end{equation}
with $\psi(0)=\psi_0\in\mathcal{H}$ and where $E\longrightarrow\frac{i}{\hbar}\partial_t$, here $H\in\mathscr{L}(\mathcal{H})$ will in general have the form $H=p^\dagger p+V(\cdot)$ with $p^\dagger\longrightarrow -\frac{i}{\hbar}\nabla+b$ and with $V\in C^{\alpha=1}(\mathcal{B}\subset\mathcal{H})$, for simplicity, in this work we will consider our scale such that $\hbar=1$, also we will have that in some suitable sense the operator $H\in\mathscr{L}(\mathcal{H})$ will be restricted by some boundary conditions related to the media where a particular quantum dynamical system evolves.

In the following sections we will implement some operator theory techniques in the theoretical analysis of the approximation schemes of the Schr{\"o}dinger semigroups and in the end some numerical implementations will be presented.

\section{Basics of Quantum Dynamical Systems} \label{qds}

Quantum dynamical systems can be studied using several types of operators, in this work we will consider in general quantum dynamical systems that evolve in a particular discretizable space of states $\mathcal{H}(G)$, i.e., a separable reproducing kernel Hilbert Space with $G\subset\subset \mathbf{R}^N$. A vector $\psi(t)\in\mathcal{H}(G)$ that satisfies \eqref{abs_ev} receives the name of wave function, the evolution of a given quatum system can be described using the wave function, the time evolution of the wave function can be computed using the corresponding Schr{\"o}dinger semigroup related to a particular quantum dynamical system using the following expression
\begin{equation}
\psi(t)=e^{-itH}\psi_0.
\label{ev2}
\end{equation}
The vector $\psi(t)\in\mathcal{H}(G)$ can also be considered like a probability amplitud related to measurements concerning to the position of the particles in a quantum system modeled by an abstract Shr{\"o}dinger evolution equation like \eqref{abs_ev}. Usually we can express quantum evolution equations using the Dirac's braket notation, we usually have that the \textit{ket} operation $\ket{\cdot}:\mathcal{H}\rightarrow\mathcal{H}$, is defined explicitly by
\begin{equation}
\ket{\phi}:=\phi, \phi\in\mathcal{H},
\end{equation}
on the other hand we have that the \textit{bra} operation $\bra{\cdot}[\cdot]:\mathcal{H}\times\mathcal{H}\rightarrow\mathbf{C}$, is defined by
\begin{equation}
\bra{\phi}[\cdot]:=\scalprod{}{\cdot}{\phi}, \phi\in\mathcal{H},
\end{equation}
it can be seen that the natural inner product of the space of states $\mathcal{H}$ can be expressed using the Dirac's braket notation in the form
\begin{equation}
\scalprod{}{\xi}{\phi}=\bra{\phi}[\xi]=\bra{\phi}\ket{\xi}=\braket{\phi|\xi}, \xi,\phi\in\mathcal{H},
\end{equation}
for any quantum operator $A\in\mathscr{L}(\mathcal{H})$ and any pair $\xi,\phi\in\mathcal{H}$, the operation $\scalprod{}{A\xi}{\phi}$ can  be expressed explicitly by
\begin{equation}
\scalprod{}{A\xi}{\phi}=\bra{\phi}[A\xi]=\bra{\phi}A\ket{\xi}=\braket{\phi|A\xi}, \xi,\phi\in\mathcal{H}.
\end{equation}
The probability density for a specific time will be given by $|\psi(t)|^2=\psi(t)\overline{\psi}(t)$, this statistical approach of the wave function, and the corresponding inner product of $\mathcal{H}(G)$ alows us to compute the expectation $\left\langle B \right\rangle_t\in \mathbf{R}$ of a given observable $B$, i.e. a quantum operator $B\in\mathscr{L}(\mathcal{H})$, using the following expression:
\begin{equation}
\braket{B}_t:=\frac{\scalprod*{\mathcal{H}(G)}{B\psi(t)}{\psi(t)}}{\scalprod*{\mathcal{H}(G)}{\psi(t)}{\psi(t)}}
\label{expc_op}
\end{equation}
wich in Dirac's braket notation is equivalent to
\begin{equation}
\braket{B}_t=\frac{\bra{\psi(t)}B\ket{\psi(t)}}{\braket{\psi(t)|\psi(t)}}.
\label{expc_op2}
\end{equation}
For any two operators $A,B\in\mathscr{L}(\mathcal{H}(G))$ on discretizable quantum space of states $\mathcal{H}(G)$ we can define a commutator operation by
\begin{equation}
[A,B]:=AB-BA
\end{equation}
we will have that two operators $X,Y\in\mathscr{L}(\mathcal{H}(G))$ commute if, and only if $[X,Y]=\mathbf{0}$, when two operators commute we also say that they are compatible. In general the operation $[\cdot,\cdot]:\mathscr{L}(\mathcal{H})\times \mathscr{L}(\mathcal{H})\rightarrow \mathscr{\mathcal{H}}$ defines an operator in $\mathscr{L}(\mathcal{H})$. If we obtain a nomalized representation $\ket{\Psi(t)}:=\ket{\psi(t)}/\norm{}{\psi(t)}$ of the wave function $\psi(t)\in\mathcal{H}$, then \eqref{expc_op2} can be represented by
\begin{equation}
\braket{B}_t=\bra{\Psi(t)}B\ket{\Psi(t)}.
\end{equation}

\begin{theorem} \label{const_motion} Constants of Motion. If a quantum operator $A\in\mathscr{L}(\mathcal{H})$ is constant in time and compatible with the Hamiltonian $H\in\mathscr{L}(\mathcal{H})$ in \eqref{abs_ev}, and if $H\in\mathscr{L}(\mathcal{H})$ is symmetric then $\braket{A}_t$ defines a constant of motion.
\end{theorem}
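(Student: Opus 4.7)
\medskip

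\noindent\textbf{Proof proposal.} The plan is to differentiate $\braket{A}_t$ in time and show the derivative vanishes, so that $t\mapsto\braket{A}_t$ is constant. I would work with the normalized representation
\[
\braket{A}_t=\bra{\Psi(t)}A\ket{\Psi(t)},
\]
so that the denominator in \eqref{expc_op2} is already handled; since $H$ is symmetric the associated group $U_t=e^{-itH}$ is norm preserving (condition (iv) from Section~\ref{intro}, which rests on Stone's theorem), so $\norm{}{\psi(t)}=\norm{}{\psi_0}$ is time independent and the normalization factor can be pulled out of the derivative without trouble.

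The next step is to differentiate using the product rule in the bra--ket pairing:
\[
\frac{d}{dt}\braket{A}_t
=\bra{\partial_t\Psi(t)}A\ket{\Psi(t)}
+\bra{\Psi(t)}(\partial_tA)\ket{\Psi(t)}
+\bra{\Psi(t)}A\ket{\partial_t\Psi(t)}.
\]
The middle term vanishes by hypothesis, since $A$ is constant in time, i.e.\ $\partial_tA=\mathbf{0}$. For the remaining two terms I would invoke the abstract evolution equation \eqref{abs_ev}, rewritten with $\hbar=1$ as $\partial_t\ket{\Psi(t)}=-iH\ket{\Psi(t)}$, together with its dual form $\partial_t\bra{\Psi(t)}=i\bra{\Psi(t)}H$, where the sign and the passing of $H$ from ket to bra use precisely the symmetry of $H$ through the braket identity $\bra{\phi}H\ket{\xi}=\scalprod{}{H\xi}{\phi}=\scalprod{}{\xi}{H\phi}$.

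Substituting these time derivatives gives
\[
\frac{d}{dt}\braket{A}_t
=i\bra{\Psi(t)}HA\ket{\Psi(t)}-i\bra{\Psi(t)}AH\ket{\Psi(t)}
=i\bra{\Psi(t)}[H,A]\ket{\Psi(t)}.
\]
Compatibility of $A$ and $H$ means $[A,H]=\mathbf{0}$, hence $[H,A]=\mathbf{0}$ as well, so the right-hand side is zero. Integrating from $0$ to $t$ then yields $\braket{A}_t=\braket{A}_0$ for all $t\in\mathbf{R}$, which is the claimed constant of motion.

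The only delicate point I anticipate is the symmetry-versus-self-adjointness distinction: to treat the dual Schr\"odinger equation rigorously one needs $\Psi(t)$ to lie in a common domain where $H$ acts as a genuine self-adjoint operator, not merely symmetric; in the setting of this paper $H\in\mathscr{L}(\mathcal{H})$ is bounded so this is automatic, but if the statement were extended to unbounded Hamiltonians, justifying the differentiation of the pairing and the passing of $H$ across the inner product would become the main technical obstacle.
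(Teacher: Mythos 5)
Your proposal is correct and follows essentially the same route as the paper's own proof: differentiate the normalized expectation $\bra{\Psi(t)}A\ket{\Psi(t)}$, use the Schr\"odinger equation and the symmetry of $H$ to arrive at $\frac{d}{dt}\braket{A}_t=i\bra{\Psi(t)}[H,A]\ket{\Psi(t)}$, and conclude from compatibility. You merely make explicit the product-rule, dual-equation, and normalization steps that the paper's two-line computation leaves implicit.
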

\begin{proof}
Since $H\in\mathscr{L}(\mathcal{H})$ si symmetric, it can be seen that
\begin{eqnarray*}
\frac{d}{dt}\braket{A}_t&=&i\bra{\Psi(t)}HA\ket{\Psi(t)}-i\bra{\Psi(t)}AH\ket{\Psi(t)}\\
&=&i\bra{\Psi(t)}[H,A]\ket{\Psi(t)}
\end{eqnarray*}
since we also have that $A$ and $H$ are compatible, then $[H,A]=\bf{0}$ and $\frac{d}{dt}\braket{A}_t=0$, therefore $\braket{A}_t$ is a constant of motion.
\end{proof}

\section{Particular Projection Methods}
In this section we will describe the approximation techniques implemented for spatial discretization of the operators
related to the dynamics and physical measurements of the quantum systems described here.

\subsection{Particular Projection in $\mathcal{H}(G)$} 

In this section and some other below we will work with the spatial discretization of operators that are present in a quantum dynamical system in Schr{\"o}-dinger picture. When we want to build a discretization of a given spatial operator we first need to define a grid, wich is set $G_{m,h}\subset G\subset \mathbf{R}^N$ that depends in some suitable sense of the parameters $m\in \mathbf{Z}^+$ and $h\in\mathbf{R}^+$, in particular the  cardinality of the grid denoted by $N_{m,h}:=|G_{m,h}|$ depends on $m,h$ throug the following rules $N_{m,h}\geq N_{m',h}, m\geq m'$ and $N_{m,h}\geq N_{m,h'}, h\leq h'$. 

Once we have defined a grid $G_{m,h}\subset G$ on the media where a quantum dynamical system evolves, we can define a particular projection $P_\mathcal{S}\in\mathscr{L}(\mathcal{H}(G),\mathcal{H}_\mathcal{S}(G))$ for $\mathcal{H}_\mathcal{S}\leqslant \mathcal{H}$, with respect to this grid, wich is a projection that can be factored in the form $P_\mathcal{S}:=p_\mathcal{S}p^\dagger_\mathcal{S}$, where $p_\mathcal{S}\in\mathscr{L}(\mathcal{H}'_\mathcal{S},\mathcal{H}_\mathcal{S})$ and $p^\dagger_\mathcal{S}\in\mathscr{L}(\mathcal{H},\mathcal{H}'_\mathcal{S})$ are particular summation and decompostion operators respectively. A particular decomposition operator $p^\dagger_\mathcal{S}\in\mathscr{L}(\mathcal{H}(G),\mathcal{H}_\mathcal{S}(G))$ is related to a given grid $G_{m,h}\subset G$ through the following explicit definition
\begin{equation}
p^\dagger_\mathcal{S}v:=\{c_k(v,G_{m,h})\}\in\mathbf{C}^{N_{m,h}}, v\in\mathcal{H}(G)
\end{equation}
the particular summation operator is related to a given set $\mathcal{H}(G)\supset\mathcal{B}\supset\mathcal{B}_m:={b_k}$ that is a subset of a particular basis $\mathcal{B}\subset \mathcal{H}(G)$, whose elements are compatible in some suitable sense with boundary conditions of wave functions in the physical media, and that satisfies $|\mathcal{B}_m|=|G_{m,h}|$, thorugh the following expression
\begin{equation}
p_\mathcal{S}x:=\sum_k x_kb_k, x=\{x_k\}\in\mathbf{C}^{N_{m,h}}
\end{equation}
the functionals $c_k(\cdot,G_{m,h})$ used for the definition of particular decomposition operators are considered in general to satisfy that for any $b\in\mathcal{B}_m$, $p_\mathcal{S}p^\dagger_\mathcal{S}b=b$, sometimes we also impose the condition $c_k(b_j,G_{m,h})=\alpha_j\delta_{k,j}$, with $\delta_{i,j}$ the Kronecker delta, the above condition is called pseudo-orthogonality or pseudo-orthonormality when $\alpha_j=1,\forall j$. For a given particular projection $P_\mathcal{S}\in\mathscr{L}(\mathcal{H},\mathcal{H}_\mathcal{S})$ we say that it has approximation order $\nu_m$, where $\nu_m$ is a number that depends in some sense on the grid paremeter $m$, if we have that for any $v\in\mathcal{H}(G)$ the projection satisfies the relation
\begin{equation}
\norm{}{P_\mathcal{S}v-v}\leq c_v h^{\nu_m}
\end{equation}
with respect a prescribed norm $\norm{}{\cdot}$ in $\mathcal{H}(G)$ and where $h$ is the mesh size of the prescribed grid. Sometimes we represent a paricular projection $P_\mathcal{S}\in\mathscr{L}(\mathcal{H},\mathcal{H}_\mathcal{S})$ in an alternative form given by $P_{m,h}$ where $m$ and $h$ are the grid parameter and mesh size respectively.

\subsection{Inner Product Matrices} For a given discretizable Hilbert space $\mathcal{H}(G)$ whose inner product is induced by the inner product map $\mathcal{M}:\mathcal{H}\times\mathcal{H}\rightarrow \mathbf{C}$ in the form
\begin{equation}
\scalprod*{\mathcal{H}}{u}{v}:=\mathcal{M}[u](v)
\end{equation} 
we can obtain a discrete representation $\mathbb{M}_\mathcal{S}$ of the inner product map $\mathcal{M}$ with respect to a particular projection $P_\mathcal{S}\in\mathscr{L}(H,H_\mathcal{S})$, through the follwing explicit definition
\begin{equation}
(\cdot)^\ast\mathbb{M}_\mathcal{S}[\cdot]:=\mathcal{M}[p_\mathcal{S}\cdot](p_\mathcal{S}\cdot)
\end{equation}
the matrix $\mathbb{M}_\mathcal{S}$ receives the name of inner product matrix. The followng result was proved in \cite{Vides}.

\begin{theorem}
Every inner product matrix is symmetric positive definite.
\end{theorem}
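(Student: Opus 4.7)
The plan is to unpack the definition of $\mathbb{M}_\mathcal{S}$ directly in terms of the inner product of $\mathcal{H}(G)$ and the summation operator $p_\mathcal{S}$, and then read off both symmetry and positive definiteness from the properties of the inner product and the linear independence of the basis subset $\mathcal{B}_m=\{b_k\}$.

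First I would fix $x,y\in\mathbf{C}^{N_{m,h}}$ and compute $x^{\ast}\mathbb{M}_\mathcal{S}y$ by substituting the defining identity $(\cdot)^{\ast}\mathbb{M}_\mathcal{S}[\cdot]=\mathcal{M}[p_\mathcal{S}\cdot](p_\mathcal{S}\cdot)$, so that
\[
x^{\ast}\mathbb{M}_\mathcal{S}y=\scalprod*{\mathcal{H}(G)}{p_\mathcal{S}x}{p_\mathcal{S}y}=\Bigl\langle\sum_{k}x_{k}b_{k},\sum_{j}y_{j}b_{j}\Bigr\rangle.
\]
Taking $x=e_{j}$ and $y=e_{k}$ identifies the $(j,k)$ entry as $\scalprod*{\mathcal{H}(G)}{b_{k}}{b_{j}}$ (or its conjugate, depending on which slot is conjugate linear), and conjugate symmetry of the inner product on $\mathcal{H}(G)$ then gives $(\mathbb{M}_\mathcal{S})_{jk}=\overline{(\mathbb{M}_\mathcal{S})_{kj}}$, i.e.\ $\mathbb{M}_\mathcal{S}$ is Hermitian (the ``symmetric'' condition in the theorem, read in the complex setting).

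For positive definiteness I would specialize to $y=x$ and use positivity of the norm on $\mathcal{H}(G)$:
\[
x^{\ast}\mathbb{M}_\mathcal{S}x=\scalprod*{\mathcal{H}(G)}{p_\mathcal{S}x}{p_\mathcal{S}x}=\norm{}{p_\mathcal{S}x}^{2}\geq 0,
\]
with equality if and only if $p_\mathcal{S}x=\sum_{k}x_{k}b_{k}=0$. This is where the only substantive content of the proof enters: since $\mathcal{B}_{m}\subset\mathcal{B}$ is, by hypothesis, a finite subset of a basis of $\mathcal{H}(G)$, the family $\{b_{k}\}$ is linearly independent, so $p_\mathcal{S}x=0$ forces $x_{k}=0$ for all $k$, hence $x=0$. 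Therefore $x^{\ast}\mathbb{M}_\mathcal{S}x>0$ for every nonzero $x\in\mathbf{C}^{N_{m,h}}$, which together with Hermitian symmetry proves that $\mathbb{M}_\mathcal{S}$ is symmetric positive definite.

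There is no real obstacle here; the proof is a two‑line verification once one notices that $p_\mathcal{S}$ is the synthesis map of a linearly independent family. The only point at which one needs to exercise a little care is making sure the hypotheses on $\mathcal{B}_{m}$ (specifically that $\mathcal{B}_{m}\subset\mathcal{B}$ with $\mathcal{B}$ a basis) are actually being used, since without that hypothesis $p_\mathcal{S}$ need not be injective and $\mathbb{M}_\mathcal{S}$ would only be positive semidefinite.
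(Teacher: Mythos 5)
Your proof is correct: $\mathbb{M}_\mathcal{S}$ is precisely the Gram matrix of the finite family $\mathcal{B}_m=\{b_k\}$ with respect to the inner product of $\mathcal{H}(G)$, and your two steps---Hermitian symmetry from conjugate symmetry of the inner product, and strict positivity of $x^\ast\mathbb{M}_\mathcal{S}x=\norm{}{p_\mathcal{S}x}^2$ from injectivity of $p_\mathcal{S}$, which in turn follows from linear independence of $\mathcal{B}_m\subset\mathcal{B}$---are exactly the substance of the result. Note that this paper does not actually prove the theorem (it only cites \cite{Vides} for it), so there is no in-paper argument to compare against; your closing remark, that without the hypothesis that $\mathcal{B}_m$ sits inside a basis one would only get positive semidefiniteness, correctly identifies the one place where a hypothesis is genuinely used.
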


If for any $u,v\in\mathcal{H}$ we take $\mathbf{u}:=p^\dagger_\mathcal{S} u$ and $\mathbf{v}:=p^\dagger_\mathcal{S} v$, it can be seen that we can express the operation $\scalprod{}{P_\mathcal{S}u}{P_\mathcal{S}v}$ using the inner product matrix in the following way
\begin{equation}
\scalprod{}{P_\mathcal{S}u}{P_\mathcal{S}v}:=\mathcal{M}[p_\mathcal{S}p^\dagger_\mathcal{S}u](p_\mathcal{S}p^\dagger_\mathcal{S}v)=\mathbf{v}^\ast\mathbb{M}_\mathcal{S}\mathbf{u}
\end{equation}
since $\mathbb{M}_\mathcal{S}$ is symmetric positive definite, and if we denote by $\mathbb{W}_\mathcal{S}$ the formal square root of $\mathbb{M}_\mathcal{S}$, we can obtain an alternative expression for the above operation that will be given by
\begin{equation}
\scalprod{}{P_\mathcal{S}u}{P_\mathcal{S}v}=(\mathbb{W}_\mathcal{S}\mathbf{v})^\ast(\mathbb{W}_\mathcal{S}\mathbf{u})=\scalprod*{2}{\mathbb{W}_\mathcal{S}\mathbf{u}}{\mathbb{W}_\mathcal{S}\mathbf{v}}
\end{equation}
where $\scalprod*{2}{\cdot}{\cdot}$ is the complex euclidian inner product. In a similar way one can express the operation $\norm{}{P_\mathcal{S}u}$, where $\norm{}{\cdot}$ is the norm induced by $\scalprod{}{\cdot}{\cdot}$ in $\mathcal{H}$ through the following relations
\begin{equation}
\norm{}{P_\mathcal{S}u}:=(\scalprod{}{P_\mathcal{S}u}{P_\mathcal{S}u})^{1/2}=(\scalprod*{2}{\mathbb{W}_\mathcal{S}\mathbf{u}}{\mathbb{W}_\mathcal{S}\mathbf{u}})^{1/2}=\norm*{2}{\mathbb{W}_\mathcal{S}\mathbf{u}}.
\end{equation}

We can express kets and bras in a discrete frame with respect to a particular projection $P_\mathcal{S}\in\mathscr{L}(\mathcal{H})$, using the rules
\begin{eqnarray}
&\ket{\psi}&\conv*{p^\dagger_\mathcal{S}}{}\ket{\Psi}:=\Psi\\
&\bra{\psi}&\conv*{(p^\dagger_\mathcal{S})^\ast}{}\bra{\Psi}:=\Psi^\ast \mathbb{M}
\end{eqnarray}
for ket and bra operations respectively.
\subsection{Particular Representation of operators} We can obtain discrete representations of operators in $\mathscr{L}(\mathcal{H},\mathcal{H}')$ for $\mathcal{H},\mathcal{H}'$ dicretizable Hilbert spaces, with respect to particular projections $P_\mathcal{S}\in\mathscr{L}(\mathcal{H})$ and $Q_\mathcal{V}\in\mathscr{L}(\mathcal{H}')$, the corresponding discretization will receive the name of particular representation, for any given $A\in\mathscr{L}(\mathcal{H},\mathcal{H}')$ we denote its particular representation with respect to $P_\mathcal{S}$ and $Q_\mathcal{V}$ by $\mathbb{A}\in\mathbf{C}^{N_\mathcal{V}\times N_\mathcal{S}}$ and we define it explicitly in the following way
\begin{equation}
\mathbb{A}:=q^\dagger_\mathcal{V}Ap_\mathcal{S}
\end{equation}
A particular representation $\mathbb{A}\in\mathbf{C}^{N_\mathcal{V}\times N_\mathcal{S}}$ is said to have approximation order $\mu_m$, with $\mu_m$ a number that depends on a given grid parameter $m$ related to the particular projections, if it satisfies the relation
\begin{equation}
\norm{}{Q_\mathcal{V}AP_\mathcal{S}u-Q_\mathcal{V}Au}=\norm*{2}{\mathbb{W}_\mathcal{V}(\mathbb{A}\mathbf{u}-q_\mathcal{V}^\dagger Au)}\leq c_u h^{\mu_m}
\end{equation}
with respect to a prescribed norm $\norm{}{\cdot}$ in $\mathcal{H}'$, where $h$ is the mesh size of the prescribed grid, an alternative expression for this property can be obtained when $\mathcal{H}=\mathcal{H}'$ and $P_\mathcal{S}=Q_\mathcal{V}$, taking $c_A:=\sup_u c_u$, for $c_u$ in the above equation, and then writing:
\begin{equation}
\norm{}{[P_\mathcal{S},A]}\leq c_A h^{\mu_m}
\end{equation}
when $h\to 0^+$ we say that the pair $P_\mathcal{S},A\in\mathscr{L}(\mathcal{H})$ almost commute.

\begin{remark} \label{obs1}
It is important to note that for any given $B\in\mathscr{L}(\mathcal{H})$ and any particular projector $P_\mathcal{S}\in\mathscr{L}(\mathcal{H},\mathcal{H}_\mathcal{S})$, we will have that $P_\mathcal{S}\phi=\phi, \forall\phi\in\mathcal{H}_\mathcal{S}:=P_\mathcal{S}\mathcal{H}$ and $B\phi=P_\mathcal{S}B\phi, \forall \phi\in \mathcal{H}_\mathcal{S}$.
\end{remark}

\subsection{Exactly Factorizable Operators} Given two Hilbert spaces $X,Y$, an operator $A:X\longrightarrow X$ is said to exactly factorizable, if it can be writen in the form $A:=BC$, with $C:X\longrightarrow Y$ and $B:Y\longrightarrow X$, in this article we will focus our attention on exactly factorizable operators of the form $A:=BC$, with $B^\dagger:=\alpha C$, for $\alpha=\pm1\in\mathbf{R}\backslash\{0\}$, these conditions imply that
\begin{equation}
\scalprod*{X}{Au}{v}=\scalprod*{Y}{Cu}{B^\dagger v}=\alpha\scalprod*{Y}{Cu}{Cv}
\label{weak_fact_cond}
\end{equation}
wich permits us to obtain the following relation
\begin{equation}
\scalprod*{X}{Au}{v}=\alpha\scalprod*{Y}{Cu}{Cv}=\scalprod*{Y}{u}{\alpha C^\dagger Cv}=\scalprod*{X}{u}{B Cv}=\scalprod*{X}{u}{Av}
\label{weak_factors}
\end{equation}
wich implies that exactly factorizable operators of this type are self-adjoint. Now if we take two finite rank particular projections $P_\mathcal{S}\in\mathscr{L}(X,X_\mathcal{S})$ and $Q_\mathcal{V}\in\mathscr{L}(Y,Y_\mathcal{S})$, and if we define the particular representations $\mathbb{A}:=p^\dagger_\mathcal{S}Bp_\mathcal{S}$, $\mathbb{B}:=p^\dagger_\mathcal{S}Bq_\mathcal{V}$ and $\mathbb{C}:=q^\dagger_\mathcal{V}Cp_\mathcal{S}$ for $A$, $B$ and $C$ respectively, we can first note that

\begin{equation}
V^\ast\mathbb{M}_\mathcal{V}\mathbb{C}U=\scalprod*{\mathcal{V}}{\mathbb{C}U}{V}=\scalprod*{\mathcal{S}}{U}{\mathbb{C}^\dagger V}=V^\ast(\mathbb{C}^\dagger)^\ast\mathbb{M}_\mathcal{S}U
\end{equation}

wich implies that $\mathbb{C}^\dagger:=\mathbb{M}_\mathcal{S}^{-1}\mathbb{C}^\ast\mathbb{M}_\mathcal{V}$, then from the definition of particular projections we can obtain the following relations

\begin{eqnarray}
V^\ast\mathbb{M}_\mathcal{S}\mathbb{A}U&=&\scalprod*{\mathcal{S}}{\mathbb{A} U}{V}\\
&=&\alpha\scalprod*{\mathcal{V}}{\mathbb{C}U}{\mathbb{C}V}\\
&=&\alpha(\mathbb{C}V)^\ast\mathbb{M}_\mathcal{V}\mathbb{C}U\\
&=&V^\ast\mathbb{C}^\ast\mathbb{M}_\mathcal{V}[\alpha\mathbb{C}]U.
\label{weak_factors_disc}
\end{eqnarray}

The above expressions permit us to represent $\mathbb{A}$ by $\mathbb{A}:=\mathbb{M}_\mathcal{S}^{-1}\mathbb{C}^\ast\mathbb{M}_\mathcal{V}[\alpha\mathbb{C}]=\alpha\mathbb{C}^\dagger\mathbb{C}$, and from this we will have that the particular representation of $A$ preserves self-adjointness and operator sign (positive/negative) according to $\alpha\in\mathbf{R}\backslash\{0\}$.

\section{Discrete Time Integration and Matrix Schr{\"o}dinger Unitary Groups} 

As we discussed in \textsection \ref{intro}, we will consider that the Hamiltonians $H\in\mathscr{L}(\mathcal{H}(G))$ in Scr{\"o}dinger models like \eqref{abs_ev}, have the form $H=H_0+V(\cdot)$, where $H_0$ defined by
\begin{equation}
H_0:=p^\dagger p
\end{equation}
and with $p\in\mathscr{L}(\mathcal{H},\mathcal{H}')$ and $p^\dagger\in\mathscr{L}(\mathcal{H}',\mathcal{H})$, we will consider that $V\in C^{\alpha=1}(\mathcal{D})$, $\mathcal{D}\subseteq\mathcal{H}\cap\mathcal{H}'$. The abstract initial value problem
\begin{equation}
\left \{
\begin{array}{l}
\ket{\psi'(t)}=-iH_0\ket{\psi(t)}-iV(\ket{\psi(t)})\\
\ket{\psi(0)}=\ket{\psi_0}
\end{array}
\right .
\label{ivp}
\end{equation}
\medskip
derived from \eqref{abs_ev}, can be rewrited introducing the integrating factor $e^{itH_0}$, in the form
\begin{equation}
\ket{\psi(t)}=e^{-itH_0}\ket{\psi_0}+\int_0^t e^{i(s-t)H_0}V(\ket{\psi(t)})ds
\end{equation}
it can be seen that $H_0$ is exactly factorizable, hence, self-adjoint and by theorem T.\ref{stone} we will have that $e^{\pm itH_0}, t\in\mathbf{R}$ will be unitary, since we also have that $V\in C^{\alpha=1}(\mathcal{D})$, we can derive the following result.
\begin{theorem}
The operator $U\in\mathscr{B}(\mathcal{D})$ defined by
\begin{equation}
U(\phi):=e^{-itH_0}\ket{\psi_0}+\int_0^t e^{i(s-t)H_0}V(\ket{\phi(t)})ds
\label{exp_contract}
\end{equation}
is a strict contraction with respect to the norm $\norm*{C([-T,T],\mathcal{D}(G))}{\cdot}$ defined by
\begin{equation}
\norm*{C([-T,T],\mathcal{D}(G))}{\xi}:=\sup_{t\in[-T,T]}\norm*{\mathcal{D}(G)}{\xi}, \xi\in\mathcal{D}(G).
\end{equation}
when $T<1/c_V.$
\end{theorem}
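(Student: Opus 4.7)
The strategy is the standard Duhamel/Picard fixed-point argument. Since both $U(\phi)(t)$ and $U(\varphi)(t)$ share the same initial-data term $e^{-itH_0}\ket{\psi_0}$, it cancels in the difference, reducing the estimate to the integral term alone. The two ingredients I will use are (i) the unitarity of $e^{i(s-t)H_0}$, which follows because $H_0=p^\dagger p$ is exactly factorizable with $\alpha=+1$ (hence self-adjoint by the computation carried out in \eqref{weak_factors}), so theorem T.\ref{stone} (Stone's theorem, invoked in the paragraph just above the statement) applies, and (ii) the Lipschitz-type hypothesis $V\in C^{\alpha=1}(\mathcal{D})$ with constant $c_V$.

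Concretely, the plan is as follows. First, for $t\in[-T,T]$ write
$$U(\phi)(t)-U(\varphi)(t)=\int_0^t e^{i(s-t)H_0}\bigl[V(\ket{\phi(s)})-V(\ket{\varphi(s)})\bigr]\,ds.$$
Next, pass the $\mathcal{D}(G)$-norm under the integral and use unitarity to eliminate the propagator factor without cost, followed by the Lipschitz bound to replace $\norm*{\mathcal{D}(G)}{V(\phi(s))-V(\varphi(s))}$ by $c_V\,\norm*{\mathcal{D}(G)}{\phi(s)-\varphi(s)}$. Bounding the resulting integrand uniformly by $c_V\,\norm*{C([-T,T],\mathcal{D}(G))}{\phi-\varphi}$ and integrating produces a factor $|t|\le T$; taking the supremum in $t$ yields
$$\norm*{C([-T,T],\mathcal{D}(G))}{U(\phi)-U(\varphi)}\le c_V\,T\,\norm*{C([-T,T],\mathcal{D}(G))}{\phi-\varphi},$$
so that $U$ is a strict contraction precisely when $T<1/c_V$, as claimed.

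The mathematics itself is routine; the main bookkeeping items I would be careful about are: handling negative $t$, where $\int_0^t$ must be bounded by $|t|$ rather than $t$; verifying that $U$ actually preserves $C([-T,T],\mathcal{D}(G))$, i.e.\ that $s\mapsto e^{i(s-t)H_0}V(\ket{\phi(s)})$ is strongly continuous with values in $\mathcal{D}$, which relies on the strong continuity of $\{e^{-itH_0}\}$ guaranteed by theorem T.\ref{stone} together with the continuity of $V$ on $\mathcal{D}$; and interpreting the ambient space $\mathscr{B}(\mathcal{D})$ appropriately (a closed ball in $C([-T,T],\mathcal{D}(G))$ on which $U$ maps into itself) so that Banach's fixed-point theorem applies and yields the mild solution of \eqref{ivp}. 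Of these, the only genuine obstacle is confirming the self-map property, but it follows directly from the same estimates used for the contraction bound.
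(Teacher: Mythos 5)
Your argument is essentially identical to the paper's proof: cancel the common initial-data term, use the unitarity of $e^{i(s-t)H_0}$ (via exact factorizability of $H_0$ and Stone's theorem) together with the Lipschitz bound on $V$ to estimate the Duhamel integral by $c_V T$ times the sup-norm of the difference. Your additional care about negative $t$ (bounding by $|t|$ rather than $t$) and about the self-map property is sound and in fact slightly more rigorous than the paper's own write-up, but the approach is the same.
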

\begin{proof}
Since for any $t\in\mathbf{R}$ $e^{\pm itH_0}$ is unitary and since $V\in C^{\alpha=1}(\mathcal{D})$ we will have that
\begin{eqnarray*}
&&\norm*{C([-T,T],\mathcal{D}(G))}{U(\ket{\phi})-U(\ket{\omega})}=\\ &&\norm*{C([-T,T],\mathcal{D}(G))}{\int_0^te^{i(s-t)H_0}[V(\ket{\phi(t)})-V(\ket{\omega(t)})]ds}\leq\\
&&c_V\norm*{C([-T,T],\mathcal{D}(G))}{\ket{\phi(t)}-\ket{\omega(t)}}\int_0^t ds \leq\\
&&c_VT\norm*{C([-T,T],\mathcal{D}(G))}{\ket{\phi(t)}-\ket{\omega(t)}}
\end{eqnarray*}
taking $T<1/c_V$ the result follows.
\end{proof}
\begin{corollary} There exists a unique solution for \eqref{ivp} in a time interval $[-T,T]$ $\subseteq\mathbf{R}$, where $T\in\mathbf{R}^+$ satisfies the restriction presented in the theorem above.
\end{corollary}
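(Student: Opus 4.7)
The plan is to deduce the corollary from the preceding theorem by a standard application of the Banach fixed point theorem. First I would observe that the integral equation
\[
\ket{\psi(t)} = e^{-itH_0}\ket{\psi_0} + \int_0^t e^{i(s-t)H_0} V(\ket{\psi(t)})\, ds
\]
is, by construction via the integrating factor $e^{itH_0}$ already used above (\ref{ivp}), equivalent to the initial value problem; hence a solution of \eqref{ivp} on $[-T,T]$ corresponds exactly to a fixed point in $C([-T,T],\mathcal{D}(G))$ of the operator $U$ appearing in \eqref{exp_contract}.

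Next I would note that $C([-T,T],\mathcal{D}(G))$, equipped with the sup-norm $\norm*{C([-T,T],\mathcal{D}(G))}{\cdot}$, is a Banach space, since $\mathcal{D}(G)$ is a closed subspace of the Hilbert space $\mathcal{H}(G)$ and uniform limits of continuous $\mathcal{D}(G)$-valued functions remain continuous and $\mathcal{D}(G)$-valued. This gives a complete metric space in which the contraction principle can be applied.

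With these ingredients, I would invoke the previous theorem: for $T<1/c_V$ the operator $U$ is a strict contraction on $C([-T,T],\mathcal{D}(G))$, so by the Banach fixed point theorem it admits a unique fixed point $\ket{\psi}\in C([-T,T],\mathcal{D}(G))$. This fixed point is, by the equivalence in the first step, the unique solution of \eqref{ivp} on $[-T,T]$, which is exactly what the corollary claims.

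The main obstacle, modest as it is, lies in justifying the equivalence between \eqref{ivp} and the integral equation solved by $U$, in particular verifying that the initial condition $\ket{\psi(0)}=\ket{\psi_0}$ is recovered (immediate, since the integral vanishes at $t=0$) and that differentiation of the integral representation reproduces the right-hand side of \eqref{ivp}; the latter relies on the strong continuity of $\{e^{-itH_0}\}_{t\in\mathbf{R}}$ granted by Stone's theorem T.\ref{stone} and on the Lipschitz regularity $V\in C^{\alpha=1}(\mathcal{D})$. Everything else is a direct invocation of contraction-mapping machinery already set up by the previous theorem.
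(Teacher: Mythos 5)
Your proposal is correct and follows essentially the same route as the paper, which simply invokes the Contraction Mapping Principle (T.\ref{contraction}) together with the preceding theorem's contraction estimate for $T<1/c_V$. The extra details you supply --- the equivalence of \eqref{ivp} with the integral equation and the completeness of $C([-T,T],\mathcal{D}(G))$ --- are exactly the steps the paper leaves implicit.
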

\begin{proof}
Follows from T.\ref{contraction}.
\end{proof}

\subsection{Matrix Schr{\"o}dinger Unitary Groups}
If we consider that the space of states is discretizable, then we can compute a particular representation $\mathbb{H}_0$ of $H_0$, of approximation order $\nu_m$, that is exactly facotrizable since $H_0$ can be exactly factored, hence self adjoint, and will be the generator of a matrix unitary group $\{\hat{\mathbb{U}}:=e^{-it\mathbb{H}_0}\}$. It can be seen that the successive approximation method can be performed in two stages in order to compute an approximate discrete solution to \eqref{ivp} in time and space. First we can obtain a generic representation of the approximate expression of particular elements of the discrete Schr{\"o}dinger semigroup $\{U_k:=e^{-ikh\mathbb{H}_0},h\in\mathbf{R}^+,k\in\mathbf{Z}\}$, this can be done using a Picard type method in the following way. We start with the problem
\begin{equation}
i\ket{\Psi'(t)}=\mathbb{H}_0\ket{\Psi(t)}
\end{equation}
with $\ket{\Psi(0)}=\ket{\Psi_0}$, using the succesive approximation method with approximation operator defined explicitly by 
\begin{equation}
T(\Psi(t)):=\ket{\Psi_0}+\int_0^t\mathbb{H}_0\ket{\Psi(t)}
\end{equation}
we can obtain the approximatin sequence defined in D.\ref{approx_seq} explicitly given by
\begin{equation}
\ket{\Psi_n(h)}:=\mathbb{U}\ket{\Psi_0}=\sum_{k=0}^n \frac{1}{k!}(-i\tau\mathbb{H}_0)^k\ket{\Psi_0}
\label{disc_sem}
\end{equation}
where $\tau\in\mathbf{R}^+$ satisfies the Picard's restriction $\tau<\norm*{2}{\mathbb{W}_\mathbb{H}\mathbb{H}_0\mathbb{W}_\mathbb{H}^{-1}}^{-1}$. It can be seen that for these types of initial value problems the succesive approximation method produces a solution that coincides with the Taylor expansion of the exponential matrix $e^{-i\tau\mathbb{H}_0}$, we can take adavantage of this using the Pad{\'e} approximant of $e^{-i\tau\mathbb{H}_0}$ in the interval $[0,\tau]\subset \mathbf{R}$ for $\tau\in\mathbf{R}^+$ a basic time step, denoted by $\mathbb{U}\in M_{N_{m,h}}(\mathbf{C})$ and defined by $\mathbb{U}:=R_{pp}(-i\tau\mathbb{H}_0), p:=\left\lfloor n/2 \right\rfloor$ where $R_{pp}(-i\tau\mathbb{H}_0)$ is defined by:
\begin{equation}
R_{pp}(-i\tau\mathbb{H}_0):=D_{pp}(-i\tau\mathbb{H}_0)^{-1}N_{pp}(-i\tau\mathbb{H}_0)
\label{group_factors}
\end{equation}
with
\begin{eqnarray}
N_{pq}(-i\tau\mathbb{H}_0)&:=&\sum_{j=0}^p\frac{(p+q-j)!p!}{(p+q)!j!(p-j)!}(-i\tau\mathbb{H}_0)\\
D_{pq}(-i\tau\mathbb{H}_0)&:=&\sum_{j=0}^q\frac{(p+q-j)!q!}{(p+q)!j!(q-j)!}(i\tau\mathbb{H}_0)
\end{eqnarray}
It can be seen that taking $\mathbb{S}:=N_{pp}(-i\tau\mathbb{H}_0)$, we will have that $D_{pp}(-i\tau\mathbb{H}_0)=\mathbb{S}^\dagger$, and if we take $\mathbb{S}^+:=(\mathbb{S}^\dagger)^{-1}$, then we can express \eqref{group_factors} in the form
\begin{equation}
\mathbb{U}=\mathbb{S}^+\mathbb{S}
\end{equation}
From the relation of \eqref{group_factors} with the Taylor expansion and succesive approximation of $e^{-i\tau\mathbb{H}_0}$ and the Picard's resctriction for $\tau=h_\tau\norm*{\mathbb{H}}{\mathbb{H}_0}^{-1}<\norm*{\mathbb{H}}{\mathbb{H}_0}^{-1}$, with $0<h_\tau<1$ in \eqref{disc_sem}, we can obtain the following estimate
\begin{lemma} \label{lema_conv1}
$\norm*{2}{e^{-i\tau\mathbb{H}_0}-\mathbb{U}}\leq\left|\frac{1}{(2p+1)!}-c_{p,2p+1}\right|h_{\tau}^{2p+1}$
\end{lemma}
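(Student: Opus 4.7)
The plan is to reduce the operator-norm estimate to a scalar estimate on the spectrum of $\mathbb{H}_0$ via the functional calculus, and then to exploit the defining order-of-contact property of the diagonal Pad\'e approximant of the exponential.

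First I would invoke the self-adjointness of $\mathbb{H}_0$, which is inherited from the exact factorization $H_0=p^\dagger p$ through the results of Section~3, where particular representations of exactly factorizable operators were shown to preserve self-adjointness. Then $\mathbb{H}_0$ admits a functional calculus, and for any entire $f$ one has $\norm*{2}{f(\mathbb{H}_0)}=\sup_{\lambda\in\sigma(\mathbb{H}_0)}|f(\lambda)|$. Taking $f(z)=e^{-i\tau z}-R_{pp}(-i\tau z)$ and noting that the Picard restriction $\tau\norm*{\mathbb{H}}{\mathbb{H}_0}=h_\tau<1$ forces $|\tau\lambda|\le h_\tau$ for every $\lambda\in\sigma(\mathbb{H}_0)$, the lemma reduces to the scalar inequality
\begin{equation*}
\sup_{|\mu|\le h_\tau}\bigl|e^{-i\mu}-R_{pp}(-i\mu)\bigr|\le\Bigl|\tfrac{1}{(2p+1)!}-c_{p,2p+1}\Bigr|h_\tau^{2p+1}.
\end{equation*}

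Next I would use the defining property of the $[p/p]$ Pad\'e approximant, namely that $R_{pp}(z)$ matches the Taylor expansion of $e^z$ through degree $2p$. Combined with absolute convergence of both power series on $|z|\le h_\tau<1$, which is precisely the regime in which the successive approximation sequence~\eqref{disc_sem} converges, this yields
\begin{equation*}
e^z-R_{pp}(z)=\Bigl(\tfrac{1}{(2p+1)!}-c_{p,2p+1}\Bigr)z^{2p+1}+\sum_{k\ge 2p+2}\Bigl(\tfrac{1}{k!}-c_{p,k}\Bigr)z^k,
\end{equation*}
which identifies both the leading constant and the power of $h_\tau$ that appear in the statement. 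The tail is then controlled by factoring out $z^{2p+1}$ and using $|z|\le h_\tau<1$ together with the closed-form expressions for $N_{pp}$ and $D_{pp}$, which give explicit decay of the coefficient differences $|1/k!-c_{p,k}|$, so that the tail can be absorbed into the leading term. The functional calculus from the first step then transfers the scalar bound back to the operator norm and completes the argument.

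The main obstacle lies in the tail control, namely rigorously showing that $\sum_{k\ge 2p+2}(1/k!-c_{p,k})z^{k-2p-1}$ is dominated by the leading coefficient difference when $|z|\le h_\tau<1$. The cleanest route I foresee is to exploit the factorization $\mathbb{U}=\mathbb{S}^+\mathbb{S}$ introduced in the excerpt, where $\mathbb{S}=N_{pp}(-i\tau\mathbb{H}_0)$ and $\mathbb{S}^+=(\mathbb{S}^\dagger)^{-1}$, by writing $e^{-i\tau\mathbb{H}_0}-\mathbb{U}=\mathbb{S}^{+}\bigl(\mathbb{S}^\dagger e^{-i\tau\mathbb{H}_0}-\mathbb{S}\bigr)$; the inner difference can then be estimated via a Neumann-type expansion whose convergence is guaranteed by the Picard restriction $h_\tau<1$, so that only the leading-order mismatch survives at the rate $h_\tau^{2p+1}$.
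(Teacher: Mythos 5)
Your proposal follows essentially the same route as the paper: the proof in the text likewise writes $e^{-i\tau\mathbb{H}_0}-\mathbb{U}$ as the power-series tail $\sum_{k\ge 2p+1}(\tfrac{1}{k!}-c_{p,k})(-i\tau\mathbb{H}_0)^k$ (using that $R_{pp}$ matches the Taylor expansion of the exponential through degree $2p$) and then bounds it by $\bigl|\tfrac{1}{(2p+1)!}-c_{p,2p+1}\bigr|\,\tau^{2p+1}\norm*{2}{\mathbb{H}_0}^{2p+1}=\bigl|\tfrac{1}{(2p+1)!}-c_{p,2p+1}\bigr|h_\tau^{2p+1}$ via the Picard restriction $\tau\norm*{2}{\mathbb{H}_0}=h_\tau$. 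The tail-absorption step you single out as the main obstacle is exactly the step the paper carries out in a single asserted inequality (all terms with $k\ge 2p+2$ are dropped without further justification), so your functional-calculus reduction and the sketched Neumann-type control of the tail add scaffolding but do not change the substance of the argument.
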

\begin{proof}
\begin{eqnarray}
\norm*{2}{e^{-i\tau\mathbb{H}_0}-\mathbb{U}}&\leq&\norm*{2}{\sum_{k=2p+1}^\infty (\frac{1}{k!}-c_{p,k})(-i\tau\mathbb{H}_0)^k}\\
&\leq&\left|\frac{1}{(2p+1)!}-c_{p,2p+1}\right||-\tau|^{2p+1}\norm*{2}{\mathbb{H}_0}^{2p+1}\\
&\leq&\left|\frac{1}{(2p+1)!}-c_{p,2p+1}\right|h_{\tau}^{2p+1}.
\end{eqnarray}
\end{proof}

Now if we want to compute the approximation of $e^{-it\mathbb{H}_0}$ corresponding to the interval $[0,t]\subset\mathbf{R}$, with $t:=m\tau, m\in\mathbf{Z}^+$, we will have that $e^{-it\mathbb{H}_0}=(e^{-i\tau\mathbb{H}_0})^m$ becomes approximated by $\mathbb{U}^m$, giving this an explicit definition for the discrete unitary group $\{\mathbb{U}_k:=\mathbb{U}^k, k\in\mathbf{Z}+$\}. Since $\mathbb{H}_0$ will be considered in general self adjoint, i.e., $\mathbb{H}_0^\dagger=\mathbb{H}_0$, we will have that $\mathbb{H}_0$ is normal, hence can be factored in the form $\mathbb{H}_0=\mathbb{V}\mathbb{D}_0\mathbb{V}^\ast$, with $\mathbb{D}_0:=\text{diag}\{d_i\}$, and taking $\Lambda_0:=R_{pp}(-i\tau\mathbb{D}_0)$ we obtain
\begin{eqnarray}
\mathbb{U}&=&\mathbb{S}^+\mathbb{S}\\
&=&\mathbb{V}\Lambda_0^+\mathbb{V}^\ast\mathbb{V}\Lambda_0\mathbb{V}^\ast\\
&=&\mathbb{V}\Lambda_0^+\Lambda_0\mathbb{V}^\ast
\end{eqnarray}
wich implies the following result.
\begin{lemma}
$\mathbb{U}^\ast\mathbb{U}=\mathbf{1}$.
\end{lemma}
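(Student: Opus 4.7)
My plan is to pass to the spectral decomposition $\mathbb{H}_0 = \mathbb{V}\mathbb{D}_0\mathbb{V}^\ast$ established in the paragraph preceding the statement. Since $\mathbb{H}_0^\ast = \mathbb{H}_0$, the matrix $\mathbb{V}$ is unitary (so $\mathbb{V}^\ast\mathbb{V} = \mathbf{1}$) and the diagonal entries $d_j$ of $\mathbb{D}_0$ are real. Because $\mathbb{U} = R_{pp}(-i\tau\mathbb{H}_0)$ is a rational function of $\mathbb{H}_0$ evaluated via the Pad{\'e} formula, the same similarity transform diagonalises it, giving $\mathbb{U} = \mathbb{V}\Lambda_0\mathbb{V}^\ast$ with the diagonal matrix $\Lambda_0 = \text{diag}\{R_{pp}(-i\tau d_j)\}$. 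The identity $\mathbb{U}^\ast\mathbb{U} = \mathbb{V}\Lambda_0^\ast\Lambda_0\mathbb{V}^\ast$ therefore reduces the claim to showing that each scalar $R_{pp}(-i\tau d_j)$ lies on the unit circle.

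The key algebraic observation I would use is the reflection symmetry $D_{pp}(z) = N_{pp}(-z)$ between the numerator and denominator polynomials of the diagonal Pad{\'e} approximant. This falls out instantly from the two sums defining $N_{pq}$ and $D_{pq}$: taking $p = q$ makes the combinatorial coefficients coincide, and only the sign of the argument is flipped. Specialising to $z = -i\tau d_j$ with $d_j \in \mathbf{R}$ and using that the coefficients of $N_{pp}$ are real, one gets $\overline{N_{pp}(-i\tau d_j)} = N_{pp}(i\tau d_j) = D_{pp}(-i\tau d_j)$. Multiplying $R_{pp}(-i\tau d_j) = N_{pp}(-i\tau d_j)/D_{pp}(-i\tau d_j)$ by its complex conjugate then collapses to $1$, whence $\Lambda_0^\ast\Lambda_0 = \mathbf{1}$ and $\mathbb{U}^\ast\mathbb{U} = \mathbf{1}$.

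The one step that needs care, which I expect to be the only real obstacle, is verifying that $D_{pp}(-i\tau d_j) \neq 0$ so that $R_{pp}$ is well defined on the spectrum of $\mathbb{H}_0$ in the first place. This is automatic under the Picard restriction on $\tau$ invoked earlier: that restriction forces each $-i\tau d_j$ to lie in a small disc about the origin, and since $D_{pp}(0) = 1$ continuity keeps the denominator bounded away from zero there. Once this nonvanishing check is in place, the result reduces to the classical observation that diagonal Pad{\'e} approximants of the exponential are unitary whenever their argument is skew-Hermitian, which is exactly the content of the modulus computation above.
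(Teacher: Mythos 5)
Your proof is correct and follows essentially the same route as the paper: diagonalize $\mathbb{H}_0=\mathbb{V}\mathbb{D}_0\mathbb{V}^\ast$ and exploit the reflection symmetry $D_{pp}(z)=N_{pp}(-z)$, which on the skew-Hermitian spectrum $\{-i\tau d_j\}$ makes the denominator the complex conjugate of the numerator, so each diagonal entry of $\Lambda_0^\ast\Lambda_0$ equals $1$. The only difference is one of rigor in your favor: you actually verify the identity the paper merely asserts as ``it can be seen that $D_{pp}(-i\tau\mathbb{H}_0)=\mathbb{S}^\dagger$'' and you address the nonvanishing of $D_{pp}$ on the spectrum, which the paper omits entirely.
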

\begin{proof}
\begin{eqnarray}
\mathbb{U}^\ast\mathbb{U}&=&\mathbb{V}\Lambda_0^\ast\Lambda_0^{-1}\mathbb{V}^\ast\mathbb{V}\Lambda_0^+\Lambda_0\mathbb{V}^\ast\\
&=&\mathbb{V}\Lambda_0^\ast\Lambda_0^{-1}\Lambda_0^+\Lambda_0\mathbb{V}^\ast\\
&=&\mathbb{V}\Lambda_0^\ast\Lambda_0^+\Lambda_0^{-1}\Lambda_0\mathbb{V}^\ast\\
&=&\mathbb{V}\mathbb{V}^\ast\\
&=&\mathbf{1}
\end{eqnarray}
\end{proof}
from the above relations can see that the operator
\begin{equation}
\hat{\mathbb{U}}:=\mathbb{W}_{\mathbb{H}}^{-1}\mathbb{U}\mathbb{W}_\mathbb{H}
\label{norm_ev}
\end{equation}
satisfies the following relations
\begin{equation}
\norm{}{\hat{\mathbb{U}}\Phi}=\norm*{2}{\mathbb{W}_\mathbb{H}\mathbb{W}_\mathbb{H}^{-1}\mathbb{U}\mathbb{W}_\mathbb{H}\Phi}=\norm*{2}{\mathbb{U}\mathbb{W}_\mathbb{H}\Phi}=\norm*{2}{\mathbb{W}_\mathbb{H}\Phi}=\norm{}{\Phi}
\end{equation}
wich implies that $\|\hat{\mathbb{U}}\|=1$, the adjoint of $\hat{\mathbb{U}}$ can be obtained in the following way:
\begin{eqnarray}
\hat{\mathbb{U}}^\dagger&=&\mathbb{M}_{\mathbb{H}}^{-1}\hat{\mathbb{U}}^\ast\mathbb{M}_{\mathbb{H}}\\
&=&\mathbb{M}_{\mathbb{H}}^{-1}(\mathbb{W}_{\mathbb{H}}^{-1}\mathbb{U}\mathbb{W}_\mathbb{H})^\ast\mathbb{M}_{\mathbb{H}}\\
&=&\mathbb{M}_{\mathbb{H}}^{-1}\mathbb{W}_{\mathbb{H}}^\ast\mathbb{U}^\ast\mathbb{W}_\mathbb{H}^+\mathbb{M}_{\mathbb{H}}\\
&=&\mathbb{W}_{\mathbb{H}}^{-1}\mathbb{W}_{\mathbb{H}}^{+}\mathbb{W}_{\mathbb{H}}^\ast\mathbb{U}^\ast\mathbb{W}_\mathbb{H}^+\mathbb{W}_{\mathbb{H}}^\ast\mathbb{W}_{\mathbb{H}}\\
&=&\mathbb{W}_{\mathbb{H}}^{-1}\mathbb{U}^\ast\mathbb{W}_{\mathbb{H}}
\end{eqnarray}
the discrete Schr{\"o}dinger unitary group relative to $\mathbb{H}_0$, will have the form $\{\hat{\mathbb{U}}_k:=\hat{\mathbb{U}}^k(\cdot),k\in\mathbf{Z}^+\}$, wich is consistent with the normalization presented at \textsection\ref{qds}. It can be seen that
\begin{equation}
\hat{\mathbb{U}}^\dagger\hat{\mathbb{U}}=\mathbf{1}=\hat{\mathbb{U}}\hat{\mathbb{U}}^\dagger
\end{equation}
and this implies that the discrete time reversal Schr{\"o}dinger unitary group will be given by $\{\hat{\mathbb{U}}_{-k}:=(\hat{\mathbb{U}}^\dagger)^{k}(\cdot),k\in\mathbf{Z}^+\}$ and will be coherent with the local time reversibility of Schr{\"o}dinger unitary groups. From lemma L.\ref{lema_conv1} we can obtain the following.

\begin{lemma} \label{lema_conv2} 
$\norm{}{e^{-im\tau\mathbb{H}_0}-\hat{\mathbb{U}}^m}\leq\frac{{m}^{2p+1}}{(2p+1)!}h_{\tau}^{2p+1}$.
\end{lemma}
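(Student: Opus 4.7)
The natural approach is the standard telescoping identity for powers: writing $A:=e^{-i\tau\mathbb{H}_0}$ and $B:=\hat{\mathbb{U}}$, I would use
\[
A^m-B^m=\sum_{k=0}^{m-1}A^{k}(A-B)B^{m-1-k}
\]
and then bound each of the $m$ summands by $\|A-B\|$, provided both $A$ and $B$ are contractions (in fact isometries) in the relevant norm. Applying the triangle inequality gives $\|A^m-B^m\|\leq m\,\|A-B\|$, after which a single application of Lemma L.\ref{lema_conv1} (suitably transported to the induced norm) would finish the argument.

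The first step is to verify that both $A$ and $B$ are isometries of the inner product induced by $\mathbb{M}_\mathbb{H}$. For $B=\hat{\mathbb{U}}$ this is already available, since the preceding calculation showed $\hat{\mathbb{U}}^\dagger\hat{\mathbb{U}}=\mathbf{1}=\hat{\mathbb{U}}\hat{\mathbb{U}}^\dagger$ and $\|\hat{\mathbb{U}}\Phi\|=\|\Phi\|$; for $A=e^{-i\tau\mathbb{H}_0}$ it follows from the fact that $\mathbb{H}_0$ is self-adjoint with respect to $(\cdot,\cdot)_\mathbb{H}$ (so $\mathbb{W}_\mathbb{H}\mathbb{H}_0\mathbb{W}_\mathbb{H}^{-1}$ is Euclidean-Hermitian and its matrix exponential is Euclidean-unitary). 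With both operator norms equal to $1$, the telescoping estimate $\|A^m-B^m\|\leq m\|A-B\|$ is immediate.

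The second step is to bound $\|A-B\|$. By the similarity $\hat{\mathbb{U}}=\mathbb{W}_\mathbb{H}^{-1}\mathbb{U}\mathbb{W}_\mathbb{H}$ and the analogous conjugation for the true exponential, one checks that the induced-norm distance $\|e^{-i\tau\mathbb{H}_0}-\hat{\mathbb{U}}\|$ equals the Euclidean-norm distance $\|\mathbb{W}_\mathbb{H} e^{-i\tau\mathbb{H}_0}\mathbb{W}_\mathbb{H}^{-1}-\mathbb{U}\|_2$, which is exactly the quantity controlled by Lemma L.\ref{lema_conv1}. Since the Padé coefficient satisfies $0\leq c_{p,2p+1}\leq \tfrac{1}{(2p+1)!}$ (the diagonal Padé approximant matches the exponential series through order $2p$, and the next coefficient is smaller in magnitude than the Taylor one), that lemma yields
\[
\|A-B\|\;\leq\;\frac{1}{(2p+1)!}\,h_\tau^{2p+1}.
\]

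Combining the two steps gives $\|e^{-im\tau\mathbb{H}_0}-\hat{\mathbb{U}}^m\|\leq \frac{m}{(2p+1)!}h_\tau^{2p+1}$, and the weaker stated bound follows from $m\leq m^{2p+1}$ for $m,p\geq 1$. The only point that I expect to require some care is the transfer of the Euclidean bound of L.\ref{lema_conv1} into the $\mathbb{M}_\mathbb{H}$-induced norm; once that is handled by the similarity argument above, the rest is routine. Everything else is pure telescoping.
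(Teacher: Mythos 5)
Your proof is correct, and it takes a genuinely different route from the paper's. The paper expands $\hat{\mathbb{U}}^m$ as the $m$-th power of the Pad\'e series and compares coefficients directly against the Taylor series of $e^{-im\tau\mathbb{H}_0}$, keeping only the leading term of the tail; you instead telescope $A^m-B^m=\sum_{k=0}^{m-1}A^{k}(A-B)B^{m-1-k}$ and use the isometry of both $e^{-i\tau\mathbb{H}_0}$ and $\hat{\mathbb{U}}$ in the $\mathbb{M}_{\mathbb{H}}$-induced norm to reduce everything to the one-step estimate of L.\ref{lema_conv1}. This is the classical stability-plus-consistency argument, and it buys you two things. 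First, it yields the sharper bound $\frac{m}{(2p+1)!}h_\tau^{2p+1}$, linear in $m$ rather than $m^{2p+1}$, which is what one actually wants for a global-in-time estimate: with $t=m\tau$ fixed it gives $O(\tau^{2p})$ accuracy, whereas a constant growing like $m^{2p+1}$ is useless as $m\to\infty$. Second, it sidesteps two delicate points in the paper's computation: the coefficient of $(-i\tau\mathbb{H}_0)^k$ in $\bigl(\sum_{j}c_{p,j}(-i\tau\mathbb{H}_0)^j\bigr)^m$ is an $m$-fold convolution of the $c_{p,j}$, not $c_{p,k}$ itself (the correct order-$(2p+1)$ coefficient of the difference is $m\bigl(\frac{1}{(2p+1)!}-c_{p,2p+1}\bigr)$, consistent with your linear-in-$m$ bound), and the paper discards the entire tail $k>2p+1$ without bounding it, whereas your argument needs only the single-step consistency already packaged in L.\ref{lema_conv1}. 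The two points you should make explicit are exactly the ones you flag: the transfer of the Euclidean bound of L.\ref{lema_conv1} to the induced norm by conjugation with $\mathbb{W}_{\mathbb{H}}$ (which goes through because the Picard restriction is stated for $\|\mathbb{W}_{\mathbb{H}}\mathbb{H}_0\mathbb{W}_{\mathbb{H}}^{-1}\|_2$, and $\mathbb{W}_{\mathbb{H}}e^{-i\tau\mathbb{H}_0}\mathbb{W}_{\mathbb{H}}^{-1}=e^{-i\tau\mathbb{W}_{\mathbb{H}}\mathbb{H}_0\mathbb{W}_{\mathbb{H}}^{-1}}$), and the inequality $\bigl|\frac{1}{(2p+1)!}-c_{p,2p+1}\bigr|\le\frac{1}{(2p+1)!}$, which follows from the standard Pad\'e remainder formula $e^{z}-R_{pp}(z)=(-1)^{p}\frac{(p!)^2}{(2p)!\,(2p+1)!}z^{2p+1}+O(z^{2p+2})$ together with $(p!)^2\le(2p)!$ --- the same replacement the paper itself makes in the last line of its own proof.
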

\begin{proof}
If we denote by $c_{p,k}$ the multinomial Pad{\'e} coefficients, then we will get
\begin{eqnarray}
\norm{}{e^{-im\tau\mathbb{H}_0}-\hat{\mathbb{U}}^m}&=&\norm{}{\sum_{k=0}^\infty \frac{(-im\tau\mathbb{H}_0)^k}{k!}-(\sum_{k=0}^\infty c_{p,k}(-i\tau\mathbb{H}_0)^k)^m}\\
&\leq&\norm{}{\sum_{k=2p+1}^\infty (\frac{m^k}{k!}-c_{p,k})(-i\tau\mathbb{H}_0)^k}\\
&\leq& \left|\frac{m^{2p+1}}{(2p+1)!}-c_{p,2p+1}\right| |-\tau|^{2p+1}\norm{}{\mathbb{H}_0}^{2p+1}\\
&\leq&\frac{m^{2p+1}}{(2p+1)!}h_\tau^{2p+1}.
\end{eqnarray}
\end{proof}

If for a given self-adjoint operator $H_1\in\mathscr{L}(\mathcal{H})$ we compute its representation with respect to a particular projection $P_\mathcal{S}$ denoted by $\mathbb{H}_1$, then we can obtain the following convergence result.

\begin{lemma} \label{lema_conv3}
$\norm{}{e^{-im\tau H_1}-\hat{\mathbb{U}}^m}\leq c_1 h_x^{\nu_m} + c_2 h_\tau^{2p+1}$.
\end{lemma}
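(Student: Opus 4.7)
The plan is to apply the triangle inequality with $e^{-im\tau\mathbb{H}_1}$ inserted as an intermediate term, splitting the total error into a purely spatial discretization error and a purely temporal (Padé) discretization error:
\begin{equation*}
\norm{}{e^{-im\tau H_1}-\hat{\mathbb{U}}^m}
\leq
\norm{}{e^{-im\tau H_1}-e^{-im\tau\mathbb{H}_1}}
+
\norm{}{e^{-im\tau\mathbb{H}_1}-\hat{\mathbb{U}}^m}.
\end{equation*}

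For the second summand I would invoke Lemma L.\ref{lema_conv2} directly applied to the self-adjoint particular representation $\mathbb{H}_1$ instead of $\mathbb{H}_0$ (the proof of that lemma only used self-adjointness of the generator and the Picard restriction on $\tau$, both of which hold here). This gives the contribution $c_2 h_\tau^{2p+1}$ with $c_2=m^{2p+1}/(2p+1)!$.

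For the first summand I would use a Duhamel/telescoping identity together with the approximation order $\nu_m$ of the particular projection $P_\mathcal{S}$. Writing
\begin{equation*}
e^{-im\tau H_1}-e^{-im\tau\mathbb{H}_1}
= -i\int_0^{m\tau} e^{-i(m\tau-s)H_1}\bigl(H_1-\mathbb{H}_1\bigr) e^{-is\mathbb{H}_1}\,ds,
\end{equation*}
and using that $e^{-i(m\tau-s)H_1}$ is unitary on $\mathcal{H}$ while $e^{-is\mathbb{H}_1}$ is unitary in the $\mathbb{W}_\mathbb{H}$-weighted norm (by the preceding discussion of $\hat{\mathbb{U}}$), one bounds the integrand by the residual $\norm{}{(H_1-\mathbb{H}_1)\,\phi}$ acting on admissible states. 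That residual is controlled by the approximation-order property of $P_\mathcal{S}$: since $\mathbb{H}_1=p^\dagger_\mathcal{S}H_1p_\mathcal{S}$ represents $P_\mathcal{S}H_1P_\mathcal{S}$, and since the almost-commutation estimate $\norm{}{[P_\mathcal{S},H_1]}\leq c_{H_1}h_x^{\mu_m}$ together with $\norm{}{P_\mathcal{S}v-v}\leq c_v h_x^{\nu_m}$ from the particular projection definition give a bound of order $h_x^{\nu_m}$ on the discretization defect. Integrating in $s\in[0,m\tau]$ yields a constant $c_1$ (depending on $m,\tau$ and on $H_1$) times $h_x^{\nu_m}$.

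The main obstacle is the first summand: the two exponentials act on spaces of different dimension, so the norm on the left has to be interpreted via the embedding through $p_\mathcal{S}$ (or equivalently by identifying $\mathbb{H}_1$ with $P_\mathcal{S}H_1P_\mathcal{S}$ acting on the finite-dimensional subspace $\mathcal{H}_\mathcal{S}\leqslant\mathcal{H}$, as noted in Remark R.\ref{obs1}). Once that identification is fixed, the Duhamel formula makes sense in $\mathscr{L}(\mathcal{H})$, and the estimate reduces to controlling $\norm{}{H_1-P_\mathcal{S}H_1P_\mathcal{S}}$ by $c\, h_x^{\nu_m}$ through the definition of approximation order for particular projections. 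Collecting the two bounds produces the stated inequality with $c_1,c_2$ depending on $m,\tau,H_1,p$.
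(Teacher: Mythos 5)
Your proof follows essentially the same route as the paper: the identical triangle-inequality splitting through the intermediate term $e^{-im\tau\mathbb{H}_1}$, with Lemma \ref{lema_conv2} handling the temporal part and the approximation order of the particular projection handling the spatial part. The only difference is that you supply a Duhamel-formula justification for the spatial term, which the paper simply asserts ``by the properties of particular projections''; your version is the more explicit of the two and is consistent with the paper's intent.
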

\begin{proof}
By the properties of particular projections we get that
\begin{eqnarray}
\norm{}{e^{-im\tau H_1}-\hat{\mathbb{U}}^m}&\leq&\norm{}{e^{-im\tau H_1}-e^{-im\tau\mathbb{H}_1}}+\norm{}{e^{-im\tau\mathbb{H}_1}-\hat{\mathbb{U}}^m}\\
&\leq& c_{H_1} h^{\nu_m}+\frac{m^{2p+1}}{(2p+1)!}h_\tau^{2p+1}
\end{eqnarray}
and taking $c_1:=c_{H_1}$ and $c_2:=\frac{m^{2p+1}}{(2p+1)!}$ the result follows.
\end{proof}

Now if for a particular representation $\mathbb{H}_0$ of $H_0$ we denote by $\{\hat{\mathbb{U}}_{k},k\in\mathbf{Z}^+\}$ and by $\{\hat{\mathbb{U}}_{-k},k\in\mathbf{Z}^+\}$ the direct and reverse Schr{\"o}dinger semigroups respectively, we can obtain a fully discrete representation for \eqref{exp_contract} in the form
\begin{equation}
\hat{\mathbb{S}}_k(\cdot):=\hat{\mathbb{U}}_k\ket{\Psi_0}+\sum_{j=0}^k w_j^k(\tau)\hat{\mathbb{U}}_{k-j}V(\cdot)
\label{exp_contract_disc}
\end{equation}
in this expression $\{w_j^k(\tau)\}$ are chosen such that the integration in time is exact for the polynomial structure of the Taylor similar part of $\hat{\mathbb{U}}$, wich allows $\hat{\mathbb{S}}_k$ to mimic the operator defined in \eqref{exp_contract}. Using the properties of particula projection methods we can derive the following discrete results.

\begin{theorem}
The operator $\hat{\mathbb{S}}_k$ is a strict contraction with respect to the norm $\norm*{C([-\tau,\tau],\mathcal{D}(G))}{\cdot}$, when $V\in C^{\alpha=1}(\mathcal{D}\subset \mathcal{H})$ and $h<1/c_V.$
\end{theorem}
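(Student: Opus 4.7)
The plan is to mirror the continuous contraction argument that appeared earlier in the section, now transferred to the fully discrete operator $\hat{\mathbb{S}}_k$. I would take two arbitrary elements $\ket{\Phi},\ket{\Omega}\in C([-\tau,\tau],\mathcal{D}(G))$ and form the difference $\hat{\mathbb{S}}_k(\ket{\Phi})-\hat{\mathbb{S}}_k(\ket{\Omega})$. The initial-condition term $\hat{\mathbb{U}}_k\ket{\Psi_0}$ is independent of the argument and cancels, leaving
\[
\hat{\mathbb{S}}_k(\ket{\Phi})-\hat{\mathbb{S}}_k(\ket{\Omega}) = \sum_{j=0}^k w_j^k(\tau)\,\hat{\mathbb{U}}_{k-j}\bigl[V(\ket{\Phi})-V(\ket{\Omega})\bigr].
\]

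Next I would bound the sum term by term with two ingredients already established. First, the unitarity-type estimate $\|\hat{\mathbb{U}}\|=1$ proved earlier, which by iteration gives $\|\hat{\mathbb{U}}_{k-j}\|\leq 1$ with respect to the norm induced by $\mathbb{M}_\mathbb{H}$ on $\mathcal{D}(G)$. Second, the Lipschitz continuity of $V$ coming from $V\in C^{\alpha=1}(\mathcal{D})$, which provides a constant $c_V$ with $\norm{}{V(\phi)-V(\omega)}\leq c_V\norm{}{\phi-\omega}$. Passing to the supremum over $t\in[-\tau,\tau]$ and pulling the scalar weights out yields
\[
\norm*{C([-\tau,\tau],\mathcal{D}(G))}{\hat{\mathbb{S}}_k(\ket{\Phi})-\hat{\mathbb{S}}_k(\ket{\Omega})} \leq c_V\Bigl(\sum_{j=0}^k |w_j^k(\tau)|\Bigr)\norm*{C([-\tau,\tau],\mathcal{D}(G))}{\ket{\Phi}-\ket{\Omega}}.
\]

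The crucial step is to control $\sum_{j=0}^k |w_j^k(\tau)|$. By the very construction explained right after \eqref{exp_contract_disc}, the weights $\{w_j^k(\tau)\}$ are chosen so that the discrete quadrature is exact on the polynomial (Taylor-like) piece of $\hat{\mathbb{U}}$; in particular, applying it to the constant $V\equiv 1$ must reproduce the underlying integral, so $\sum_{j=0}^k w_j^k(\tau)$ equals the length of the integration subinterval, which is at most the basic step $h$ in the statement. Combining this with the Lipschitz bound gives the contraction factor $c_V h<1$ under the hypothesis $h<1/c_V$, which is precisely the strict contraction claim.

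The main obstacle is the bookkeeping around the quadrature weights: one must argue that the chosen weights are nonnegative (or at worst satisfy $\sum_j|w_j^k(\tau)|\leq h$) so that the triangle inequality does not inflate the bound beyond $c_V h$. If nonnegativity fails for some convention of the $w_j^k$, a refinement using the exactness for the particular exponential kernel (rather than just constants) would be needed, but under the polynomial-exactness design stated in the paper the constant-function test is enough, and the theorem follows directly from $c_V h<1$.
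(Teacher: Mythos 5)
Your proposal is correct and follows essentially the same route as the paper's proof: cancel the initial-condition term, bound each summand using $\|\hat{\mathbb{U}}_{k-j}\|\leq 1$ and the Lipschitz constant $c_V$ of $V$, and then bound the weight sum $\sum_j w_j^k(\tau)$ by the length of the integration interval via the exactness of the quadrature on constants. Your explicit attention to the nonnegativity of the weights (so that $\sum_j|w_j^k|$ does not exceed $\sum_j w_j^k$) is a point the paper's proof silently assumes, but it does not change the argument.
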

\begin{proof}
First it is important to note, that since the quadrature rule defined by the sequence $\{w_j^k(\tau)\}$ is exact for the polynomial structure of $\hat{\mathbb{U}}$ and if we take $\Psi^j:=\Psi(j\tau/k)$ for any given function $\Psi\in C([-\tau,\tau],\mathcal{H}(G))$, we will have that
\begin{eqnarray*}
\norm*{\mathbb{H}}{\hat{\mathbb{S}}_k(\Phi(t))-\hat{\mathbb{S}}_k(\Upsilon(t))}
&=&\norm*{\mathbb{H}}{\sum_{j=0}^k w_j^k(\tau)\hat{\mathbb{U}}_{k-j}(V(\Phi^j)-V(\Upsilon^j))}\\
&\leq&\sum_{j=0}^k w_j^k(\tau)\norm*{\mathbb{H}}{\hat{\mathbb{U}}_{k-j}}\norm*{\mathbb{H}}{V(\Phi^j)-V(\Upsilon^j)}\\
&\leq&\sum_{j=0}^k w_j^k(\tau)c_V\norm*{\mathbb{H}}{\Phi^j-\Upsilon^j}
\end{eqnarray*}
the last relation implies that
\begin{eqnarray*}
\norm*{C([-\tau,\tau],\mathcal{D}(G))}{\hat{\mathbb{S}}_k(\Phi(t))-\hat{\mathbb{S}}_k(\Upsilon(t))}&=&\sup_{t\in[-\tau,\tau]}\norm*{\mathbb{H}}{\hat{\mathbb{S}}_k(\Phi(t))-\hat{\mathbb{S}}_k(\Upsilon(t))}\\
&\leq&\left(\sum_{j=0}^k w_j^k(\tau) \right)c_V\sup_j\norm*{\mathbb{H}}{\Phi^j-\Upsilon^j}\\
&\leq&c_V\tau\norm*{C([-\tau,\tau],\mathcal{D}(G))}{\Phi-\Upsilon}
\end{eqnarray*}
and taking $\tau<1/c_V$ the result follows.
\end{proof}

\begin{corollary} \label{disc_exist} The semidiscrete Schr{\"o}dinger initial value problem
\begin{equation}
\left\{
\begin{array}{l}
i\ket{\Psi'(t)}=\mathbb{H}_0\ket{\Psi(t)}+V(\ket{\Psi(t)}) \\
\ket{\Psi(0)}=\Psi_0
\end{array}
\right .
\label{divp}
\end{equation}
has a unique solution determined by $\ket{\Psi_0}$ on an interval $[-\tau,\tau]$ for $\tau<1/c_V$.
\end{corollary}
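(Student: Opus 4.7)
The plan is to deduce this corollary directly from the preceding contraction theorem together with the Banach fixed point theorem, essentially mirroring the continuous-time argument given just before Theorem (on contractivity of $U$) and its corollary.

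First, I would rewrite the semidiscrete IVP \eqref{divp} in Duhamel form. Since $\mathbb{H}_0$ is self-adjoint, $e^{\pm it\mathbb{H}_0}$ is unitary by T.\ref{stone}, so applying the integrating factor $e^{it\mathbb{H}_0}$ to \eqref{divp} and integrating from $0$ to $t$ yields the equivalent integral equation
\begin{equation*}
\ket{\Psi(t)} = e^{-it\mathbb{H}_0}\ket{\Psi_0} + \int_0^t e^{i(s-t)\mathbb{H}_0}\,V(\ket{\Psi(s)})\,ds,
\end{equation*}
whose fully discrete analogue is precisely the operator $\hat{\mathbb{S}}_k$ defined in \eqref{exp_contract_disc}, because $\{w_j^k(\tau)\}$ was chosen so that the quadrature reproduces integration exactly against the polynomial (Padé/Taylor) part of $\hat{\mathbb{U}}$. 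Hence fixed points of $\hat{\mathbb{S}}_k$ in $C([-\tau,\tau],\mathcal{D}(G))$ correspond exactly to solutions of \eqref{divp} with $\ket{\Psi(0)} = \Psi_0$.

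Next, I would invoke the preceding theorem: under the hypothesis $V \in C^{\alpha=1}(\mathcal{D}\subset\mathcal{H})$ and $\tau < 1/c_V$, the operator $\hat{\mathbb{S}}_k$ is a strict contraction on the Banach space $C([-\tau,\tau],\mathcal{D}(G))$ with the sup norm. Since $\mathcal{D}(G)$ is complete and $C([-\tau,\tau],\mathcal{D}(G))$ inherits completeness, the Banach fixed point theorem yields a unique fixed point $\ket{\Psi^*} \in C([-\tau,\tau],\mathcal{D}(G))$ of $\hat{\mathbb{S}}_k$, and by the equivalence established in the first step this fixed point is the unique solution of \eqref{divp} on $[-\tau,\tau]$ satisfying $\ket{\Psi^*(0)} = \Psi_0$.

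The argument is essentially routine once the two ingredients are in place; no step poses a genuine obstacle. The only subtlety worth pointing out explicitly is the equivalence between the differential form \eqref{divp} and the discrete Duhamel fixed-point equation solved by $\hat{\mathbb{S}}_k$, which relies on the exactness of the quadrature weights $\{w_j^k(\tau)\}$ on the polynomial structure of $\hat{\mathbb{U}}$; this ensures one does not lose the initial condition or introduce spurious consistency errors when passing between the two formulations. Beyond that, the corollary follows from the previous theorem exactly as stated, so its proof reduces to a single citation of that result.
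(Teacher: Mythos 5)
Your proposal is correct and follows essentially the same route as the paper, whose entire proof is the single citation ``Follows from T.\ref{contraction}''; you simply make explicit the two ingredients that citation presupposes, namely the equivalence of \eqref{divp} with the fixed-point equation for $\hat{\mathbb{S}}_k$ and the strict-contraction property established in the preceding theorem. The extra detail on the Duhamel reformulation and the exactness of the quadrature weights is a faithful unpacking of what the paper leaves implicit, not a different argument.
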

\begin{proof}
Follows from T.\ref{contraction}.
\end{proof}
\begin{lemma}
A solution $\ket{\Psi(t)}$ to \eqref{divp} satisfy the following estimate with respect to the its appoximating sequence
\begin{equation}
\norm*{C([-\tau,\tau],\mathcal{D}(G))}{\ket{\Psi(t)}-\ket{\Psi_n(t)}}\leq\frac{(c_V\tau)^n}{1-(c_V\tau)}(2\norm*{C([-\tau,\tau],\mathcal{D}(G))}{\ket{\Psi_0}}+M_V\tau)
\end{equation}
where $M_V:=\sup_{x}\|V(x)\|_\mathbb{H}$.
\end{lemma}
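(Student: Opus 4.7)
The plan is to invoke the standard a priori Banach fixed point iteration bound applied to the contraction $\hat{\mathbb{S}}_k$ established in the preceding theorem, and then to estimate the ``first step'' term explicitly. Let me write $q:=c_V\tau<1$ for the contraction constant, and interpret $\ket{\Psi_n(t)}$ as the Picard iterates $\ket{\Psi_n}:=\hat{\mathbb{S}}_k(\ket{\Psi_{n-1}})$ starting from the constant function $t\mapsto\ket{\Psi_0}$, with unique fixed point $\ket{\Psi(t)}$ guaranteed by C.\ref{disc_exist}.

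First I would recall the abstract fixed point estimate: if $T$ is a $q$-contraction on a complete metric space, then for its iterates one has
\begin{equation*}
\|x^{\ast}-x_n\|\leq\frac{q^n}{1-q}\|x_1-x_0\|.
\end{equation*}
Applying this in $C([-\tau,\tau],\mathcal{D}(G))$ to $T=\hat{\mathbb{S}}_k$ reduces the problem to bounding $\norm*{C([-\tau,\tau],\mathcal{D}(G))}{\hat{\mathbb{S}}_k(\ket{\Psi_0})-\ket{\Psi_0}}$ by $2\norm*{C([-\tau,\tau],\mathcal{D}(G))}{\ket{\Psi_0}}+M_V\tau$.

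To obtain this bound I would use the explicit definition \eqref{exp_contract_disc}, split by the triangle inequality into the free propagation piece and the nonlinear quadrature piece, and then use two already-established ingredients: (i) the discrete unitary group $\hat{\mathbb{U}}_j$ is an isometry in the $\mathbb{H}$-norm (so $\norm*{\mathbb{H}}{\hat{\mathbb{U}}_k\ket{\Psi_0}}=\norm*{\mathbb{H}}{\ket{\Psi_0}}$), and (ii) the quadrature weights $\{w_j^k(\tau)\}$ are exact on constant polynomials, hence $\sum_{j=0}^k w_j^k(\tau)=\int_0^\tau ds=\tau$. Combining these yields
\begin{equation*}
\norm*{\mathbb{H}}{\hat{\mathbb{S}}_k(\ket{\Psi_0})-\ket{\Psi_0}}\leq \norm*{\mathbb{H}}{\hat{\mathbb{U}}_k\ket{\Psi_0}}+\norm*{\mathbb{H}}{\ket{\Psi_0}}+\Big(\sum_{j=0}^k w_j^k(\tau)\Big)M_V\leq 2\norm*{\mathbb{H}}{\ket{\Psi_0}}+M_V\tau,
\end{equation*}
and taking the supremum over $t\in[-\tau,\tau]$ (the right-hand side is already uniform in $t$) gives the desired bound on $\|x_1-x_0\|$.

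The main obstacle, such as it is, is purely bookkeeping: justifying the identity $\sum_{j=0}^k w_j^k(\tau)=\tau$ from the remark immediately after \eqref{exp_contract_disc} that the quadrature is exact on the polynomial Taylor structure of $\hat{\mathbb{U}}$ (in particular, on constants), and identifying $\ket{\Psi_n(t)}$ in the statement with the Picard iterates of $\hat{\mathbb{S}}_k$ so that the Banach iteration estimate literally applies. Once these identifications are made, substituting into $\|x^{\ast}-x_n\|\leq q^n(1-q)^{-1}\|x_1-x_0\|$ produces exactly the claimed inequality.
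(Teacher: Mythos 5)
Your proposal is correct and follows essentially the same route as the paper: the paper likewise bounds the first Picard step $\norm*{\mathbb{H}}{\hat{\mathbb{S}}(\ket{\Psi_0})-\ket{\Psi_0}}\leq 2\norm*{\mathbb{H}}{\ket{\Psi_0}}+\tau\norm*{\mathbb{H}}{V(\ket{\Psi_0})}$ via the triangle inequality, the isometry of $\hat{\mathbb{U}}_{k-j}$, and $\sum_j w_j^k(\tau)=\tau$, then substitutes into the a priori contraction estimate of R.\ref{rcontraction} with $K=c_V\tau$. Your extra care in identifying $\ket{\Psi_n(t)}$ with the iterates of $\hat{\mathbb{S}}_k$ and in justifying $\sum_j w_j^k(\tau)=\tau$ from exactness on constants only makes explicit what the paper leaves implicit.
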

\begin{proof}
It can be seen that 
\begin{eqnarray}
\norm*{\mathbb{H}}{\ket{\Psi_1(\tau)}-\ket{\Psi_0(\tau)}}&=&\norm*{\mathbb{H}}{\hat{\mathbb{S}}(\ket{\Psi_0})-\ket{\Psi_0}}\\
&=&\norm*{\mathbb{H}}{\hat{\mathbb{U}}_k\ket{\Psi_0}+\sum_{j=0}^k w_j(k)\hat{\mathbb{U}}_{k-j}V(\ket{\Psi_0})-\ket{\Psi_0}}\\
&\leq&2\norm*{\mathbb{H}}{\ket{\Psi_0}}+\sum_{j=0}^kw_j(k)\norm*{\mathbb{H}}{\hat{\mathbb{U}}_{k-j}}\norm*{\mathbb{H}}{V(\ket{\Psi_0})}\\
&\leq&2\norm*{\mathbb{H}}{\ket{\Psi_0}}+\tau\norm*{\mathbb{H}}{V(\ket{\Psi_0})}
\end{eqnarray}
also it can be seen that $\sup_\tau\norm*{\mathbb{H}}{\ket{\Psi_1(\tau)}-\ket{\Psi_0(\tau)}}\leq2\norm*{C([-\tau,\tau],\mathcal{D}(G))}{\ket{\Psi_0}}+M_V\tau$, replacing above expressions in R.\ref{rcontraction} and taking $\tau<1/c_V$ and $K=c_V\tau$ the result follows.
\end{proof}

\begin{theorem} \label{estimate_disc}
If we denote by $\ket{\psi(t)}\in\mathcal{H}$ and by $\ket{\Psi_n(t)}\in\mathbb{H}$ the exact solution to \eqref{ivp} and the n-th time approximating solution to \eqref{divp} respectively and if we have that particular representations of quantum operators in $\mathbb{H}$ have approximation order $\nu_m$. Then we will have that $\ket{\psi(t)}\in\mathcal{H}$ and $\ket{\Psi_n(t)}\in\mathbb{H}$ satisfy the following estimate
\begin{equation}
\norm*{C([-\tau,\tau],\mathcal{D}(G))}{\ket{\psi(t)}-\ket{\Psi_n(t)}}\leq c_\psi h^{\nu_m}+\frac{(c_V\tau)^n}{1-(c_V\tau)}(2\norm*{C([-\tau,\tau],\mathcal{D}(G))}{\ket{\Psi_0}}+M_V\tau)
\end{equation}
\end{theorem}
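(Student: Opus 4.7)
The plan is to insert the exact semidiscrete solution $\ket{\Psi(t)}$ of \eqref{divp} (whose existence and uniqueness on $[-\tau,\tau]$ is guaranteed by Corollary \ref{disc_exist}) between the two objects to be compared, and apply the triangle inequality to split the total error into a \emph{spatial} piece and a \emph{temporal} piece:
$$\norm*{C([-\tau,\tau],\mathcal{D}(G))}{\ket{\psi(t)}-\ket{\Psi_n(t)}}\leq \norm*{C([-\tau,\tau],\mathcal{D}(G))}{\ket{\psi(t)}-\ket{\Psi(t)}}+\norm*{C([-\tau,\tau],\mathcal{D}(G))}{\ket{\Psi(t)}-\ket{\Psi_n(t)}}.$$
The second (temporal) summand is exactly what the preceding lemma bounds, giving the $\frac{(c_V\tau)^n}{1-c_V\tau}\bigl(2\norm*{C([-\tau,\tau],\mathcal{D}(G))}{\ket{\Psi_0}}+M_V\tau\bigr)$ contribution in the claim; the task therefore reduces to bounding the first (spatial) summand by $c_\psi h^{\nu_m}$.

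For the spatial summand I would write both exact solutions in their Duhamel forms,
$$\ket{\psi(t)}=e^{-itH_0}\ket{\psi_0}+\int_0^t e^{i(s-t)H_0}V(\ket{\psi(s)})\,ds, \qquad \ket{\Psi(t)}=\hat{\mathbb{U}}(t)\ket{\Psi_0}+\int_0^t \hat{\mathbb{U}}(t-s)V(\ket{\Psi(s)})\,ds,$$
identify $\ket{\Psi(s)}$ with its embedding $p_\mathcal{S}\ket{\Psi(s)}\in\mathcal{H}$, and subtract. Lemma L.\ref{lema_conv3} bounds the free-propagator discrepancy $\|e^{\pm itH_0}-\hat{\mathbb{U}}(t)\|$ on $[-\tau,\tau]$ by $c_1 h^{\nu_m}+c_2 h_\tau^{2p+1}$, which handles the homogeneous piece directly. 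In the integrand I would insert the intermediate quantity $\hat{\mathbb{U}}(t-s)V(\ket{\psi(s)})$ so that the integrand splits further into $\bigl(e^{i(s-t)H_0}-\hat{\mathbb{U}}(t-s)\bigr)V(\ket{\psi(s)})$, again controlled by Lemma L.\ref{lema_conv3}, and $\hat{\mathbb{U}}(t-s)\bigl(V(\ket{\psi(s)})-V(\ket{\Psi(s)})\bigr)$, controlled by the unitarity of $\hat{\mathbb{U}}$ and the Lipschitz condition $V\in C^{\alpha=1}(\mathcal{D})$. Collecting yields an integral inequality
$$\norm*{\mathbb{H}}{\ket{\psi(t)}-\ket{\Psi(t)}}\leq C_0\,h^{\nu_m}+c_V\int_0^t \norm*{\mathbb{H}}{\ket{\psi(s)}-\ket{\Psi(s)}}\,ds,$$
to which Gronwall's lemma applies on $[-\tau,\tau]$, producing $\norm*{C([-\tau,\tau],\mathcal{D}(G))}{\ket{\psi(t)}-\ket{\Psi(t)}}\leq C_0 e^{c_V\tau}h^{\nu_m}$; setting $c_\psi:=C_0 e^{c_V\tau}$ (and absorbing the $h_\tau^{2p+1}$ term, which by the stability coupling between $h$ and $h_\tau$ is dominated by $h^{\nu_m}$) closes the spatial estimate.

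The main obstacle is the bookkeeping around the identification of $\ket{\psi(s)}\in\mathcal{H}$ with $\ket{\Psi(s)}\in\mathbb{H}$: the two vectors live in different spaces, so each swap between them incurs an extra $h^{\nu_m}$ cost through the approximation order of the particular projection, and these costs accumulate inside the Duhamel integral. What saves us is precisely that $V$ is Lipschitz with constant $c_V$, so the accumulation is linear in the error and Gronwall converts it into a single multiplicative factor $e^{c_V\tau}$, which is a harmless constant on the bounded contraction interval $[-\tau,\tau]$. Combining this spatial bound with the temporal bound from the preceding lemma yields the stated estimate with $c_\psi$ depending on $c_1$, $c_2$, $c_V$ and $\tau$ but not on $h$ or $n$.
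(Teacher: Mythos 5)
Your top-level decomposition is exactly the paper's: insert the exact semidiscrete solution $\ket{\Psi(t)}$ of \eqref{divp} (justified by C.\ref{disc_exist}), apply the triangle inequality, and bound the temporal (Picard-iteration) summand by the preceding lemma. Where you genuinely diverge is in the spatial summand: the paper simply \emph{asserts} $\|\ket{\psi(t)}-\ket{\Psi(t)}\|\leq C_\psi(t)h^{\nu_m}$ as an immediate consequence of the hypothesis that particular representations have approximation order $\nu_m$, sets $c_\psi:=\sup_t C_\psi(t)$, and moves on; you actually \emph{derive} that bound from the two Duhamel representations, a splitting of the integrand, the Lipschitz property of $V$, and Gronwall's lemma. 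Your route is more work but buys a real proof of the step the paper leaves as an appeal to the hypothesis, and it exhibits the dependence $c_\psi = C_0 e^{c_V\tau}$ explicitly. One caveat: since $\ket{\Psi(t)}$ is the \emph{exact} solution of the semidiscrete system \eqref{divp}, its Duhamel formula should be written with the exact matrix group $e^{-it\mathbb{H}_0}$ rather than with the Pad\'e approximant $\hat{\mathbb{U}}$; using $\hat{\mathbb{U}}$ is what drags the $h_\tau^{2p+1}$ term into your spatial estimate, which you then absorb by invoking a ``stability coupling'' between $h$ and $h_\tau$ that the paper never assumes. That patch is unnecessary: with $e^{-it\mathbb{H}_0}$ in place of $\hat{\mathbb{U}}$ the only spatial error sources are the representation error of $H_0$ (order $\nu_m$, via L.\ref{lema_conv3} or directly from the approximation-order hypothesis) and the Gronwall accumulation, and the stated bound follows cleanly without any time-step term. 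With that correction your argument is sound and strictly more complete than the one printed in the paper.
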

\begin{proof}
If we denote by $\ket{\Psi(t)}\in\mathbb{H}$ the exact solution to \eqref{divp} then we will have that, since particular representations of quantum operators in $\mathbb{H}$ have approximation order $\nu_m$, we will have that $\norm*{\mathbb{H}}{\ket{\psi(t)}-\ket{\Psi(t)}}\leq C_\psi(t)h^{\nu_m}, \forall t\in[-\tau,\tau],0<\tau<\infty$, taking $c_\psi:=\sup_t C_\psi(t)$, we will obtain the relation $\norm*{C([-\tau,\tau],\mathcal{D}(G))}{\ket{\psi(t)}-\ket{\Psi(t)}}\leq c_\psi h^{\nu_m}$. Hence
\begin{eqnarray*}
\norm*{C([-\tau,\tau],\mathcal{D}(G))}{\ket{\psi(t)}-\ket{\Psi_n(t)}}&\leq&\norm*{C([-\tau,\tau],\mathcal{D}(G))}{\ket{\psi(t)}-\ket{\Psi(t)}}\\
&&+\norm*{C([-\tau,\tau],\mathcal{D}(G))}{\ket{\Psi(t)}-\ket{\Psi_n(t)}}\\
&\leq&c_\psi h^{\nu_m}+\frac{(c_V\tau)^n}{1-(c_V\tau)}(2\norm*{C([-\tau,\tau],\mathcal{D}(G))}{\ket{\Psi_0}}\\
&&+M_V\tau).
\end{eqnarray*}
wich provides the desired result.
\end{proof}

\begin{remark} \label{obs2}
Remark R.\ref{obs1} implies that for any two compatible operators $A,B\in\mathscr{L}(\mathcal{H})$ we will have that
\begin{eqnarray}
\mathbb{O}=P_\mathcal{S}\mathbf{0}P_\mathcal{S}&=&P_\mathcal{S}[A,B]P_\mathcal{S}\\
&=&P_\mathcal{S}(AB-BA)P_\mathcal{S}\\
&=&P_\mathcal{S}ABP_\mathcal{S}-P_\mathcal{S}BAP_\mathcal{S}\\
&=&P_\mathcal{S}AP_\mathcal{S}BP_\mathcal{S}-P_\mathcal{S}BP_\mathcal{S}AP_\mathcal{S}\\
&=&P_\mathcal{S}AP_\mathcal{S}^2BP_\mathcal{S}-P_\mathcal{S}BP_\mathcal{S}^2AP_\mathcal{S}\\
&=&A_\mathcal{S}B_\mathcal{S}-B_\mathcal{S}A_\mathcal{S}\\
&=&[A_\mathcal{S},B_\mathcal{S}].
\end{eqnarray}
\end{remark}

\begin{corollary} Discrete constants of motion. A particular projection $\mathbb{A}\in\mathscr{L}(\mathbb{H})$ of an operator $A\in\mathscr{L}(\mathcal{H})$ that is compatible with $H_0\in\mathscr{L}(\mathcal{H})$ defines a constant of motion $\braket{\mathbb{A}}_t$
for the system modeled by:
\begin{equation}
\left\{
\begin{array}{l}
i\partial_t\ket{\Psi(t)}=\mathbb{H}_0\ket{\Psi(t)}\\
\ket{\Psi(0)}=\Psi_0
\end{array}
\right . 
\end{equation}
where $\mathbb{H}_0:=p^\dagger_\mathcal{S}H_0p_\mathcal{S}$ and $\mathbb{H}_0:=p^\dagger_\mathcal{S}H_0p_\mathcal{S}$.
\end{corollary}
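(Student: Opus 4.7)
The plan is to mirror the proof of Theorem T.\ref{const_motion} entirely in the matrix setting, using the bra-ket conventions established in the Inner Product Matrices subsection. First I would write the discrete expectation as
\begin{equation*}
\braket{\mathbb{A}}_t = \bra{\Psi(t)}\mathbb{A}\ket{\Psi(t)} = \Psi(t)^{\ast}\mathbb{M}_{\mathbb{H}}\mathbb{A}\,\Psi(t),
\end{equation*}
assuming $\Psi(t)$ has been normalized so that $\Psi(t)^{\ast}\mathbb{M}_{\mathbb{H}}\Psi(t)=1$ (a normalization preserved by $\hat{\mathbb{U}}$, by the earlier norm computation for $\hat{\mathbb{U}}$).

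Next I would differentiate in $t$ and substitute the discrete evolution $\ket{\Psi'(t)}=-i\mathbb{H}_0\ket{\Psi(t)}$, which also gives $(\Psi(t)^{\ast})'=i\Psi(t)^{\ast}\mathbb{H}_0^{\ast}$. Plugging in yields
\begin{equation*}
\tfrac{d}{dt}\braket{\mathbb{A}}_t = i\,\Psi(t)^{\ast}\mathbb{H}_0^{\ast}\mathbb{M}_{\mathbb{H}}\mathbb{A}\,\Psi(t) - i\,\Psi(t)^{\ast}\mathbb{M}_{\mathbb{H}}\mathbb{A}\mathbb{H}_0\,\Psi(t).
\end{equation*}
Because $H_0$ is exactly factorizable and hence self-adjoint, the discussion in the Exactly Factorizable Operators subsection guarantees that $\mathbb{H}_0^{\dagger}=\mathbb{H}_0$, i.e.\ $\mathbb{M}_{\mathbb{H}}^{-1}\mathbb{H}_0^{\ast}\mathbb{M}_{\mathbb{H}}=\mathbb{H}_0$, so $\mathbb{H}_0^{\ast}\mathbb{M}_{\mathbb{H}}=\mathbb{M}_{\mathbb{H}}\mathbb{H}_0$. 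Substituting, the right-hand side collapses to
\begin{equation*}
\tfrac{d}{dt}\braket{\mathbb{A}}_t = i\bra{\Psi(t)}[\mathbb{H}_0,\mathbb{A}]\ket{\Psi(t)},
\end{equation*}
which is the discrete analogue of the identity used in the proof of T.\ref{const_motion}.

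Finally I would apply Remark R.\ref{obs2}: since $A$ and $H_0$ are compatible in $\mathscr{L}(\mathcal{H})$, their particular projections satisfy $[\mathbb{H}_0,\mathbb{A}]=\mathbf{0}$, so $\tfrac{d}{dt}\braket{\mathbb{A}}_t=0$ and $\braket{\mathbb{A}}_t$ is a constant of motion for the semidiscrete system. The main obstacle is not the time differentiation but the algebraic step that converts $\mathbb{H}_0^{\ast}\mathbb{M}_{\mathbb{H}}$ into $\mathbb{M}_{\mathbb{H}}\mathbb{H}_0$; that step must be justified from the exact-factorizability of $H_0$ rather than from naive Hermiticity of the complex conjugate transpose, since in the discrete frame $\mathbb{H}_0^{\ast}$ and $\mathbb{H}_0^{\dagger}$ differ by conjugation with $\mathbb{M}_{\mathbb{H}}$. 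Once that identification is in place, R.\ref{obs2} supplies the commutation and the corollary follows from the same one-line argument as T.\ref{const_motion}.
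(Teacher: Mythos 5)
Your proposal is correct and follows essentially the same route as the paper: the paper's proof simply invokes Remark R.\ref{obs2} to obtain $[\mathbb{A},\mathbb{H}_0]=\mathbb{O}$ and then cites Theorem T.\ref{const_motion}, whereas you unfold that citation by redoing the derivative-of-expectation computation explicitly in the matrix frame. Your added care in distinguishing $\mathbb{H}_0^{\ast}$ from $\mathbb{H}_0^{\dagger}$ via conjugation with $\mathbb{M}_{\mathbb{H}}$ is a worthwhile clarification of why the discrete analogue of the symmetry hypothesis holds, but it does not change the underlying argument.
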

\begin{proof}
From R.\ref{obs2} we get that $[\mathbb{A},\mathbb{H}_0]=\mathbb{O}$, then the result follows from T.\ref{const_motion} 
\end{proof}

\begin{remark}
It is important to observe that $[A_{m,h},B_{m,h}]=\mathbb{O}, \forall h\in \mathbf{R}+$, then we will have that $[A,B]=\mathbf{0}$, with $A:=\lim_{h\to0^+} A_{m,h}$ and $B:=\lim_{h\to0^+}B_{m,h}$.
\end{remark}

\section{Examples}

In this section we will present some examples of particular implementation of approximation schemes and some related estimates.

\begin{example} Quantum Harmonic Oscillator. Given a quantum system on a square box $G=[0,L]\times[0,L], L<\infty$ whose time evolution is described by a quantum harmonic oscillator equation that has the form
\begin{equation}
i\frac{\partial}{\partial t}\ket{\psi(t)}:=\Delta_2\ket{\psi(t)} +(x^2+y^2)\ket{\psi(t)}
\label{exa1}
\end{equation}
with $\psi(0)=\psi_0\in\mathcal{S}(G)\subset \mathcal{H}(G)$and if the model is restricted to physical conditions that prevent any representative particle to be located an the boundary, i.e., $|\ket{\psi(t)}|^2=0,x\in\partial G, t\in\mathbf{R}$. Physical restrictions imply null Dirichlet boundary conditions for \eqref{exa1}. If we take $H_0:=\Delta_2:=\nabla\cdot\nabla=p^\dagger p$, wich is clearly exactly factorizable, and if we take its particular projection to $\mathbb{H}$ a spectral element space of first order, then we can express \eqref{exa1} in the form
\begin{equation}
i\frac{d}{dt}\ket{\Psi(t)}:=\mathbb{M}^{-1}\mathbb{P}^\ast\mathbb{M}'\mathbb{P}\ket{\Psi(t)} +(\mathbb{X}^2+\mathbb{Y}^2)\ket{\Psi(t)}
\label{exa1_disc}
\end{equation}
\end{example}
with $\ket{\Psi(0)}=\Psi_0$. It can be seen that $V(\cdot)\in C^{\alpha=1}(\mathcal{D}(G)\subset\mathcal{H}(G))$ is linear and that $c_V:=2L^2$. Using Cea's lemma and a procedure similar to the followed in Chapter VII \textsection 5 in \cite{Showalter}, it can be seen that the particular representation in \eqref{exa1_disc} has approximation order $\nu_m=2$. Also from \eqref{weak_factors_disc} we can observe that $\mathbb{H}_1:=\mathbb{H}_0+\mathbb{X}^2+\mathbb{Y}^2$ is self-adjoint,
hence $i\mathbb{H}_1$ is conservative and $e^{\pm it\mathbb{H}_1}$ will be unitary with respecto to $\norm*{\mathbb{H}}{\cdot}$. If we take $\hat{\mathbb{U}}$ to be given by the Crank-Nicholson approximation $\hat{\mathbb{U}}:=(\mathbf{1}+i\tau\mathbb{H}_1)^{-1}(\mathbf{1}-i\tau\mathbb{H}_1)$ wich is clearly a Pad{\'e} approximant of $e^{-it\mathbb{H}_1}$ and if we replace this approximation in \eqref{exp_contract_disc}, since all the conditions are satisfied, then by C.\ref{disc_exist} and L.\ref{lema_conv3} we will have that there exists an approximate solution $\ket{\Psi_1(t)}$ to \eqref{exa1_disc} that converges to the exact solution to \eqref{exa1} according to the estimate
\begin{eqnarray}
\norm*{}{\ket{\psi(t)}-\ket{\Psi_1(t)}}&\leq& c_\psi h^2+\frac{m^3}{6}h_\tau^3
\end{eqnarray}
where $h\in\mathbf{R}^+$ is the mesh size of the particular grid $G_{1,h}\subset G$ implemented. It is also important to note that if we denote the energy expectation evolution by $\braket{E}_t$ and its discrete representation by $\braket{\mathbb{E}}_{n}$ then it can be seen that
\begin{eqnarray}
\delta_\tau\braket{\mathbb{E}}_{n}&=&\frac{1}{\tau}(\braket{\mathbb{E}}_{n+1}-\braket{\mathbb{E}}_{n})\\
&=&\frac{1}{\tau}(\bra{\hat{\mathbb{U}}^{n+1}\Psi_0}\mathbb{H}_1\ket{\hat{\mathbb{U}}^{n+1}\Psi_0}-\bra{\hat{\mathbb{U}}^n\Psi_0}\mathbb{H}_1\ket{\hat{\mathbb{U}}^n\Psi_0})\\
&=&\bra{\frac{1}{\tau}(\hat{\mathbb{U}}-\mathbf{1})\hat{\mathbb{U}}^n\Psi_0}\mathbb{H}_1\ket{\hat{\mathbb{U}}^n\Psi_0}\nonumber\\
&&+\bra{\hat{\mathbb{U}}^n\Psi_0}\mathbb{H}_1\ket{\frac{1}{\tau}(\hat{\mathbb{U}}-\mathbf{1})\hat{\mathbb{U}}^n\Psi_0} \nonumber\\
&&+\bra{(\hat{\mathbb{U}}-\mathbf{1})\hat{\mathbb{U}}^n\Psi_0}\mathbb{H}_1\ket{\frac{1}{\tau}(\hat{\mathbb{U}}-\mathbf{1})\hat{\mathbb{U}}^n\Psi_0}
\end{eqnarray}
now it can be seen that the total variation for $\tau=0$ gives 
\begin{equation}
\delta_0\braket{\mathbb{E}}_n=i\bra{\hat{\mathbb{U}}^n\Psi_0}[\mathbb{H}_1,\mathbb{H}_1]\ket{\hat{\mathbb{U}}^n\Psi_0}=0
\end{equation}
wich means that $\delta_0$ kills $\braket{\mathbb{E}}_n$, $\forall n$, i.e., $\braket{\mathbb{E}}_n$ is a discrete constant of motion.

\begin{example} Nonlinear Schr{\"o}dinger Equation. For quantum dynamical systems described by a nonlinear Schr{\"o}dinger equation of the form
\begin{equation}
i\frac{\partial}{\partial t}\ket{\psi(t)}:=\Delta_2\ket{\psi(t)} +\alpha|\psi(t)|^n\ket{\psi(t)}
\label{exa2}
\end{equation}
with $\ket{\psi(0)}:=\ket{\psi_0}$, $2\leq n\in\mathbf{Z}^+$ and $\alpha\in\mathbf{C}$. If we have that the quantum dynamical systems in this example evolve in the same media described in example I and have the same physical localization restrictions for its representative particles. Then the only condition that we need to check is the locally Lipschitz condition of $V(X):=\alpha|X|^n X$, so we will need to restrict our attention to functions on a set $\mathcal{S}(G)\subset\mathcal{H}$ that is caracterized by $\mathcal{S}(G):=\{\phi\in\mathcal{H}(G): \norm{}{\phi}\leq r, \mathbf{R}^+\ni r<\infty\}$, now, since $||X|^nX-|Y|^nY|\leq C_n(|X|^n+|Y|^n)|X-Y|$, where $C_n>0$ is a value that depends on $n$, we will have that $c_V:=2|\alpha| C_nr^n$ is the Lipschitz constant of $V(\cdot)$ with respect to $\norm*{\mathcal{H}(G)}{\cdot}$. If we use the same spatial particular projection than in the first example reewriting \eqref{exa2} in the form
\begin{equation}
i\frac{d}{dt}\ket{\Psi(t)}:=\mathbb{M}^{-1}\mathbb{P}^\ast\mathbb{M}'\mathbb{P}\ket{\Psi(t)} +\alpha|\Psi(t)|^n\ket{\Psi(t)}
\label{exa2_disc}
\end{equation}
and if we also approximate $\mathbb{U}$ using the Crank-Nicholson approximation then by C.\ref{disc_exist} and T.\ref{estimate_disc} we will have that there exists an approximate solution $\ket{\Psi_1(t)}$ to problems of the form \eqref{exa2_disc} that converges to the exact solution to \eqref{exa1} according to the estimate
\begin{equation}
\norm*{C([-\tau,\tau],\mathcal{D}(G))}{\ket{\psi(t)}-\ket{\Psi_1(t)}}\leq c_\psi h^2+\frac{2|\alpha| C_nr^{n+1}\tau}{1-2|\alpha| C_nr^n\tau}(2 + |\alpha| r^n\tau)
\end{equation}
where $h\in\mathbf{R}^+$ is the mesh size of the particular grid $G_{1,h}\subset G$ implemented.
\end{example}

In a similar way we can implement the operator techniques used here to study convergence and other related properties for other particular projectors with different approximation orders, also other Pad{\'e} approximants than the Crank-Nicholson scheme can be used for the approximation of the integrating factor.

\section{Conclusion}
Particular projection methods can be implemented in the functional numerical analysis of several types of Schr{\"o}dinger evolution equations, in this work we introduce some of the strategies that can be used and also we derive some estimates that can be very useful when we deal with the numerical solution of such important equations in quantum physics.

\begin{center}
 \normalsize\scshape{Acknowledgements}
\end{center}

\medskip

I want to say Thanks: To Hashem for everything, to Mirna, for her love, for her support, for making me laugh... to Stanly Steinberg for his support and for so many useful comments and suggestions, to Concepci{\'o}n Ferrufino, Rosibel Pacheco and Jorge Destephen for their support and advice.

\appendix

\section{Some Results from Functional Analysis}

In this section we present some important results from functional analysis that are used in this work.

\begin{definition} An operator $T\in\mathcal{B}(X)$ with $X$ a Banach space, for wich 
\begin{equation}
\norm{}{T(x)-T(y)}\leq\norm{}{x-y}, x,y\in X 
\end{equation}
is called a contraction. If there is a $K<1$ for wich $\norm{}{T(x)-T(y)}\leq K\norm{}{x-y}$, T is called a strict contraction.
\end{definition}
\begin{theorem}\label{contraction}Contraction Mapping Principle. A strict contraction $T\in\mathcal{B}(X)$ on a Banach space $X$ has a unique fixed point, ie., there exists a unique $x\in X$ such that $T(x)=x$.
\end{theorem}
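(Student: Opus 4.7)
The plan is to prove the classical Banach fixed point theorem in two stages: first establish uniqueness via a direct contradiction argument, and then construct the fixed point as the limit of an explicit Picard iteration. Throughout, let $K<1$ denote the strict contraction constant, so that $\norm{}{T(x)-T(y)}\leq K\norm{}{x-y}$ for all $x,y\in X$.

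For uniqueness, I would suppose that $x,y\in X$ both satisfy $T(x)=x$ and $T(y)=y$. Then
\begin{equation*}
\norm{}{x-y}=\norm{}{T(x)-T(y)}\leq K\norm{}{x-y},
\end{equation*}
so $(1-K)\norm{}{x-y}\leq 0$. Since $1-K>0$, this forces $\norm{}{x-y}=0$, giving $x=y$.

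For existence, I would pick any $x_0\in X$ and define the iteration $x_{n+1}:=T(x_n)$. The key telescoping estimate is $\norm{}{x_{n+1}-x_n}\leq K\norm{}{x_n-x_{n-1}}$, which iterates to $\norm{}{x_{n+1}-x_n}\leq K^n\norm{}{x_1-x_0}$. Then for any $m>n$, a triangle inequality combined with the geometric series yields
\begin{equation*}
\norm{}{x_m-x_n}\leq \sum_{j=n}^{m-1}K^j\norm{}{x_1-x_0}\leq \frac{K^n}{1-K}\norm{}{x_1-x_0}.
\end{equation*}
Since $K<1$, this bound tends to $0$ as $n\to\infty$, so $\{x_n\}$ is Cauchy. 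Because $X$ is complete, there exists $x\in X$ with $x_n\to x$. Finally, since $T$ is a strict contraction it is Lipschitz and hence continuous, so passing to the limit in $x_{n+1}=T(x_n)$ gives $x=T(x)$.

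I do not expect any serious obstacle here: the only nontrivial ingredient is the geometric series estimate which exploits $K<1$ in an essential way, and the completeness of $X$ which is precisely what allows the Cauchy sequence to produce an actual fixed point rather than just an approximate one.
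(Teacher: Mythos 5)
Your proof is correct and complete: it is the standard Picard-iteration argument for the Banach fixed point theorem, with uniqueness from $(1-K)\|x-y\|\leq 0$ and existence from the geometric-series Cauchy estimate plus completeness of $X$. The paper itself offers no proof of this statement — it is recorded in the appendix as a classical background result from functional analysis and simply invoked elsewhere (e.g.\ in the corollaries on existence of solutions) — so there is no authorial argument to compare against. One small bonus worth noting: your intermediate bound $\|x_m-x_n\|\leq K^n(1-K)^{-1}\|x_1-x_0\|$, upon letting $m\to\infty$, is exactly the a~priori error estimate that the paper states separately in Remark~\ref{rcontraction} and uses in its convergence lemmas, so your proof also supplies the justification for that remark.
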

\begin{definition} \label{approx_seq} Succesive approximation methods for fixed points. Given a strict contraction $T\in\mathscr{L}(X)$ on a banach space $X$, the sequence $\{x_n\}$ defined for some $y\in X$ in the form
\begin{equation}
x_n:=\left \{
\begin{array}{l}
y, n=0\\
T(x_{n-1}), n\geq 1
\end{array}
\right .
\end{equation}
will be called an approximation sequence to $x$, and the iterative method will receive the name of successive approximations method.
\end{definition}
\begin{remark}\label{rcontraction}
A fixed point $x\in X$ of a contraction $T\in\mathcal{B}(X)$ in a Banach space $X$, satisfies the following estimate
\begin{equation}
\norm{}{x-x_m}\leq K^m(1-K)^{-1}\norm{}{x_1-x_0}.
\end{equation}
with respect to its approximating sequence.
\end{remark}
\begin{definition} Locally Lipschitz Functions. A function $f$ is locally Lipschitz ,i.e., $f\in C^{\alpha=1}(\overline{B}_r(0))$, with $\overline{B}_r(0)$ a closed ball of radius $r$ centered in $0$, if for $0<\varepsilon_r\in\mathbb{R}$ that depends on $r$, small enough, there exists $c_f<\infty$ with
\begin{equation}
\norm*{H^m(G)}{f(u)-f(v)}\leq c_f\norm*{H^m(G)}{u-v}
\end{equation}
when $\norm*{H^m(G)}{u-v}\leq\varepsilon_r$.
\end{definition}

\begin{definition} \label{disip} Dissipative Operator. An elliptic operator $A\in\mathscr{L}(H^n(G))$ is said to be dissipative if we have that
\begin{equation}
\text{Re} \scalprod{}{Ax}{x}\leq 0 \:\:, x\in \text{dom}(A).
\end{equation}
in particular if equality holds, then $A$ is said to be conservative.
\end{definition}

\begin{theorem}
If $A\in\mathcal{L}(\mathcal{H}(G))$ where $\mathcal{H}(G)$ is a discretizable Hilbert space, and if $A$ is closed, densely defined and dissipative then it generates a contractive semigroup.
\end{theorem}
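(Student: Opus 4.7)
The plan is to realize this statement as the Lumer--Phillips characterization of contraction semigroup generators and reduce it to the Hille--Yosida theorem via resolvent estimates. The structure of the argument breaks into three stages: a norm lower bound on $\lambda I - A$ from dissipativity, a closedness/range argument from the hypothesis that $A$ is closed, and finally invocation of the Hille--Yosida criterion.

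First I would exploit dissipativity to show that for every $\lambda>0$ and every $x\in\mathrm{dom}(A)$ one has $\|(\lambda I - A)x\| \geq \lambda\|x\|$. The computation is the standard one: expand $\|(\lambda I - A)x\|\,\|x\| \geq |\langle (\lambda I - A)x, x\rangle| \geq \mathrm{Re}\,\langle (\lambda I - A)x, x\rangle = \lambda\|x\|^2 - \mathrm{Re}\,\langle Ax,x\rangle \geq \lambda\|x\|^2$ by Definition \ref{disip}. This immediately yields that $\lambda I - A$ is injective and that its inverse (on its range) has operator norm at most $1/\lambda$.

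Second, I would use the hypothesis that $A$ is closed to conclude that $\mathrm{Ran}(\lambda I - A)$ is closed in $\mathcal{H}(G)$: if $(\lambda I - A)x_n \to y$, then the norm estimate forces $\{x_n\}$ to be Cauchy, hence convergent to some $x$, and closedness of $A$ gives $x \in \mathrm{dom}(A)$ with $(\lambda I - A)x = y$. The hard part will be to upgrade this closed range to \emph{all} of $\mathcal{H}(G)$ for at least one $\lambda_0>0$. The standard way in a Hilbert space setting is to observe that if $\mathrm{Ran}(\lambda_0 I - A)^\perp$ were non-trivial, it would contain a unit vector $z$ satisfying $\langle (\lambda_0 I - A)x, z\rangle = 0$ for all $x\in\mathrm{dom}(A)$, which places $z\in\mathrm{dom}(A^\ast)$ with $(\lambda_0 I - A^\ast)z = 0$; dissipativity of $A^\ast$ (which follows from the Hilbert space structure together with dense domain of $A$ and dissipativity of $A$) and the same lower bound applied to $A^\ast$ then force $z = 0$, a contradiction. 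This is the delicate step and the real substance of the theorem.

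Finally, with $(\lambda I - A)^{-1}$ everywhere defined and satisfying $\|(\lambda I - A)^{-1}\|\leq 1/\lambda$ for all $\lambda>0$ (a perturbation argument extends the range condition from $\lambda_0$ to all positive $\lambda$), I would invoke the Hille--Yosida theorem to conclude that $A$ generates a strongly continuous semigroup $\{T_t\}_{t\geq 0}$ with $\|T_t\|\leq 1$, which is the desired contractive semigroup. The discretizability of $\mathcal{H}(G)$ plays no role in this argument beyond providing the Hilbert space structure, so the proof is essentially the classical Lumer--Phillips argument transplanted to this setting.
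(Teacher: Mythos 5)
The paper states this result in its appendix as background material and offers no proof of it, so there is nothing internal to compare your argument against; I can only assess the proposal on its own terms. Your overall strategy --- the Lumer--Phillips reduction to Hille--Yosida via the lower bound $\|(\lambda I-A)x\|\geq\lambda\|x\|$, the closed-range argument, and the orthogonal-complement computation placing a putative defect vector $z$ in $\mathrm{dom}(A^\ast)$ with $(\lambda_0 I-A^\ast)z=0$ --- is the standard and correct framework, and the first two stages are carried out correctly.

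The gap is exactly at the step you yourself flag as delicate: the parenthetical claim that dissipativity of $A^\ast$ ``follows from the Hilbert space structure together with dense domain of $A$ and dissipativity of $A$.'' This is false. A closed, densely defined, dissipative operator on a Hilbert space need not have a dissipative adjoint, and consequently need not generate a contraction semigroup at all. Take $\mathcal{H}=L^2(0,1)$ and $Au=u'$ with $\mathrm{dom}(A)=\{u\in H^1(0,1):u(0)=u(1)=0\}$. Then $\mathrm{Re}\langle Au,u\rangle=\tfrac{1}{2}(|u(1)|^2-|u(0)|^2)=0$, so $A$ is dissipative (indeed conservative), closed and densely defined; but $A^\ast v=-v'$ with domain all of $H^1(0,1)$, and $\mathrm{Re}\langle A^\ast v,v\rangle=\tfrac{1}{2}(|v(0)|^2-|v(1)|^2)>0$ for $v(x)=1-x$. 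Here $\lambda I-A$ is injective with closed range but not surjective (a first-order equation cannot generically satisfy two boundary conditions), so no perturbation argument will extend the range to all of $\mathcal{H}$. The moral is that ``dissipative'' must be upgraded to ``maximally dissipative,'' i.e.\ $\mathrm{Ran}(\lambda_0 I-A)=\mathcal{H}$ for some $\lambda_0>0$ must be assumed as an independent hypothesis; it is not a consequence of closedness plus dissipativity. If the theorem as stated is to be rescued, the only available resource is the word ``elliptic'' in Definition \ref{disip}: for a genuinely elliptic operator one can obtain the range condition from coercivity via Lax--Milgram, and that --- not an abstract adjoint argument --- is where the real substance of the proof would have to live.
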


\begin{theorem} \label{stone} Stone's Theorem. A densely defined operator $iA \in \mathscr{L}(\mathcal{H})$ in a complex Hilbert space $\mathcal{H}$ is the generator of a strongly continuous unitary group on H if and only if A is self-adjoint.

\end{theorem}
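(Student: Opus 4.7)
The plan is to prove both directions using the standard machinery, with the spectral theorem doing the heavy lifting on the sufficiency side and a resolvent argument on the necessity side.

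For the sufficiency direction ($A$ self-adjoint $\Rightarrow$ $iA$ generates a strongly continuous unitary group), I would invoke the spectral theorem to write $A = \int_{\mathbf{R}} \lambda \, dE(\lambda)$ for a projection-valued measure $E$ supported on $\sigma(A)\subset\mathbf{R}$. Then I would define $U(t) := \int_{\mathbf{R}} e^{it\lambda}\, dE(\lambda)$ by Borel functional calculus. The group property $U(t+s)=U(t)U(s)$ and $U(0)=\mathbf{1}$ follow from the multiplicativity of the functional calculus on the bounded Borel functions $\lambda\mapsto e^{it\lambda}$. Unitarity follows because $\overline{e^{it\lambda}}\, e^{it\lambda}=1$, so $U(t)^\ast U(t)=\mathbf{1}$. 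Strong continuity at $t=0$ reduces by dominated convergence to $\int_{\mathbf{R}} |e^{it\lambda}-1|^2\, d\langle E(\lambda)x,x\rangle \to 0$ as $t\to 0$. Finally, to identify the generator, I would show that for $x\in\mathrm{dom}(A)$, $\lim_{t\to 0}(U(t)x-x)/t = iAx$, again by dominated convergence applied to the difference quotients of $e^{it\lambda}$ using the bound $|(e^{it\lambda}-1)/t|\le|\lambda|$, and that conversely the existence of this limit forces $x\in\mathrm{dom}(A)$.

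For the necessity direction (generator $iA$ $\Rightarrow$ $A$ is self-adjoint), I would first establish symmetry. Given $x,y\in\mathrm{dom}(A)$, unitarity gives $\langle U(t)x, U(t)y\rangle = \langle x,y\rangle$ for all $t\in\mathbf{R}$. Differentiating at $t=0$ yields $\langle iAx,y\rangle + \langle x,iAy\rangle = 0$, which rearranges to $\langle Ax,y\rangle = \langle x,Ay\rangle$, so $A$ is symmetric (and densely defined, since the domain of the generator of a $C_0$-semigroup is always dense). To upgrade symmetry to self-adjointness, I would use the existence of the inverse group $U(-t)$: because both $\{U(t)\}_{t\ge 0}$ and $\{U(-t)\}_{t\ge 0}$ are contraction semigroups with generators $iA$ and $-iA$ respectively, the Hille–Yosida / Lumer–Phillips framework implies that both $iA$ and $-iA$ are $m$-dissipative. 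Equivalently, both $\mathbf{1}-iA$ and $\mathbf{1}+iA$ have dense range (in fact, are surjective). This is precisely the deficiency-index criterion showing that the symmetric operator $A$ has deficiency indices $(0,0)$, hence is self-adjoint.

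The main obstacle is the necessity direction, specifically the jump from $A$ symmetric to $A$ self-adjoint: merely differentiating the unitarity relation only yields symmetry, and one must exploit the full two-sided group structure to conclude $\mathrm{ran}(\mathbf{1}\pm iA) = \mathcal{H}$. The cleanest way is to write the resolvents explicitly as Laplace transforms, $(\mathbf{1}\mp iA)^{-1} = \int_0^\infty e^{-s}U(\pm s)\,ds$, and verify by direct computation (differentiating under the integral and integrating by parts) that these bounded operators are genuine two-sided inverses of $\mathbf{1}\mp iA$ on $\mathrm{dom}(A)$; surjectivity of $\mathbf{1}\pm iA$ then follows, completing the deficiency-index argument. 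The sufficiency direction is conceptually routine once the spectral theorem is available, but technically requires care with domains since $A$ may be unbounded.
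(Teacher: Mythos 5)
The paper states Stone's theorem only as a background result in its appendix and supplies no proof of its own (it is quoted as standard, cf.\ Reed--Simon in the bibliography), so there is no paper argument to compare against. Your sketch is the standard and correct proof: the spectral/functional calculus construction for sufficiency, and, for necessity, symmetry from differentiating $\braket{U(t)x|U(t)y}=\braket{x|y}$ together with surjectivity of $\mathbf{1}\pm iA$ obtained from the Laplace-transform resolvent formula for the two contraction semigroups $t\mapsto U(\pm t)$, $t\geq 0$, which closes the deficiency-index argument. The only step worth making explicit is that the deficiency-index criterion is applied to a \emph{closed} symmetric operator; this is automatic here because the generator of a strongly continuous semigroup is closed, but it should be said.
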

\end{document}